   \newcommand{\thlab}[1]{\thlabel{#1}} 
\theoremstyle{plain}
\newtheorem{theorem}{Theorem}
\newtheorem{lemma}[theorem]{Lemma}
\newtheorem{claim}[theorem]{Claim}
\newtheorem{question}[theorem]{Question}
\numberwithin{theorem}{section}
\theoremstyle{definition}
\newcommand{\tw}{\ensuremath{\textrm{tw}}}
\newcommand{\eps}{\ensuremath{\varepsilon}}
\renewcommand{\epsilon}{\ensuremath{\varepsilon}}
\renewcommand\tilde[1]{\widetilde{#1}}
\renewcommand\hat[1]{\widehat{#1}}
\newcommand\se{\subseteq}
\renewcommand\to{\rightarrow}
\newcommand\nto{\nrightarrow}
\newcommand\sprod[2]{\ensuremath{#1\boxtimes #2}}
\newcommand\red{\ensuremath{\mathrm{red}}}
\newcommand\blue{\ensuremath{\mathrm{blue}}}
\newcommand{\cG}{{\mathcal G}}
\renewcommand{\ge}{\geqslant}
\renewcommand{\le}{\leqslant}
\renewcommand{\geq}{\geqslant}
\renewcommand{\leq}{\leqslant}
\title{\bf The size Ramsey number of graphs\\ with bounded treewidth}
\author{
Nina Kam\v{c}ev\thanks{School of Mathematics, Monash University, Melbourne, Australia. Email: {\tt nina.kamcev@monash.edu.}}
\and
Anita Liebenau\thanks{School of Mathematics and Statistics, UNSW Sydney, NSW 2052, Australia. Email: {\tt a.liebenau@unsw.edu.au.} Supported by the Australian research council (DE170100789 and DP180103684).}
\and
David R. Wood\thanks{School of Mathematics, Monash University, Melbourne, Australia. Email: {\tt david.wood@monash.edu}. Research supported by the Australian Research Council.}
\and
Liana Yepremyan\thanks{Mathematical Institute, University of Oxford, Oxford, UK. Email: {\tt yepremyan@maths.ox.ac.uk.} Supported by ERC Consolidator Grant 647678 and by a Robert Bartnik Fellowship of the School of Mathematics, Monash University. The author would like to thank for the hospitality the School of Mathematics, Monash University, and the School of Mathematics and Statistics, UNSW, where this work was partially carried out. 
}
}
\begin{document}
\maketitle

\begin{abstract}
 A graph $G$ is \emph{Ramsey} for a graph $H$ if every 2-colouring of the  edges of $G$ contains a monochromatic copy of $H$. We consider the following question: if $H$ has bounded treewidth, is there a `sparse' graph $G$ that is Ramsey for $H$? Two notions of sparsity are considered. Firstly, we show that if the maximum degree and treewidth of $H$ are bounded, then there is a graph $G$ with $O(|V(H)|)$ edges that is Ramsey for $H$. This was previously only known for the smaller class of graphs $H$ with bounded bandwidth. On the other hand, we prove that the treewidth of a graph $G$ that is Ramsey for $H$ cannot be bounded in terms of the treewidth of $H$ alone. In fact, the latter statement is true even if  the treewidth is replaced by the degeneracy and $H$ is a tree.
\end{abstract}

\section{Introduction}

A graph $G$ is \emph{Ramsey} for a graph $H$, denoted by $G\to H$, if every $2$-colouring of the edges of $G$ contains a monochromatic copy of $H$. In this paper we are interested in how sparse $G$ can be in terms of $H$ if $G\to H$. The two measures of sparsity that we consider are the number of edges in $G$ and the treewidth of $G$.

The \emph{size Ramsey number} of a graph $H$, denoted by $\hat r (H)$, is the minimum number of edges in a graph $G$ that is Ramsey for $H$. The notion was introduced by Erd\H{o}s, Faudree, Rousseau  and Schelp~\cite{efrs1978}. Beck~\cite{beck1983} proved $\hat r(P_n)\le 900n$, answering a question of  Erd\H{o}s~\cite{e1981}. The constant 900 was subsequently improved by Bollob\'as~\cite{bollobas2001}, and by Dudek and Pra\l at~\cite{dp2015}. In these proofs the host graph $G$ is random.  
Alon and Chung~\cite{ac1988}  provided an explicit construction of a graph with $O(n)$ edges that is Ramsey for $P_n$. 

Beck~\cite{beck1983} also conjectured that  the size Ramsey number of bounded-degree trees is linear in the number of vertices, and noticed that there are trees (for instance, double stars) for which it is quadratic. 
Friedman and Pippenger \cite{fp1987} proved Beck's conjecture. The implicit constant was subsequently improved by Ke~\cite{k1993} and by Haxell and Kohayakawa~\cite{hk1995}. Finally, Dellamonica Jr \cite{d2012} proved that the size Ramsey number of a tree $T$ is determined by a simple structural parameter $\beta(T)$ up to a constant factor, thus establishing another conjecture of Beck~\cite{beck1990}. 

In the same paper, Beck asked whether all bounded degree graphs have a linear size Ramsey number, but this was disproved by R{\"o}dl and Szemer\'edi~\cite{rs2000}. They constructed a family of graphs of maximum degree 3 with superlinear size Ramsey number. 

In 1995, Haxell, Kohayakawa and \L uczak showed that cycles have linear size Ramsey number~\cite{hkl1995}. Conlon~\cite{conlon2016} 
asked whether, more generally, the $k$-th power of the path $P_n$ has size Ramsey number at most $cn$, where the constant $c$ only depends on $k$. Here the $k$-th power of a graph $G$ is obtained by adding an edge between every pair of vertices at  distance at most $k$ in $G$. Conlon's question was recently answered in the affirmative by Clemens, Jenssen, Kohayakawa, Morrison, Mota, Reding and Roberts~\cite{clemens2019}.

Their result is equivalent to saying that graphs of bounded bandwidth have linear size Ramsey number. We show that the same conclusion holds in the following more general setting. The treewidth of a graph $G$, denoted by $\tw(G)$, can be defined to be the minimum integer $w$ such that $G$ is a subgraph of a chordal graph with no $(w+2)$-clique. While this definition is not particularly illuminating, the intuition is that the treewidth of $G$ measures how `tree-like' $G$ is. For example, trees have treewidth 1. 
Treewidth is of fundamental importance in the graph minor theory  of Robertson and Seymour and in algorithmic graph theory; see \cite{bodlaender98,HW17,reed03} for surveys on treewidth.

\begin{theorem}
\thlab{thm:tw}
For all integers $k, d$ there exists $c=c(k,d)$ such that if $H$ is a graph 
of maximum degree $d$ and treewidth at most $k$, then  
$$ \hat r (H) \le c |V(H)|.$$
\end{theorem}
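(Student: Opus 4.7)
The plan is to take the host graph to be a sparse random graph and show that it is Ramsey for $H$ with positive probability. Let $n = |V(H)|$ and consider $G = G(N,p)$ with $N = Cn$ and $p = C/n$ for a sufficiently large constant $C = C(k,d)$; then the expected number of edges of $G$ is $O(n)$, and by standard concentration one can work with a realisation having $O(n)$ edges.

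I would divide the argument into two parts. The first is a Ramsey-structural step: show that with positive probability, every $2$-colouring of $E(G)$ contains a monochromatic subgraph $G'$ on $(1-\eta)N$ vertices satisfying a strong expansion property, say that every set $S \subseteq V(G')$ of size at most $\alpha N$ has at least $(d+1)|S|$ external neighbours in $G'$. Such a statement would typically be proved by invoking a sparse-regularity lemma applied to the majority colour to extract a pseudo-random subgraph, then pruning it to an expander of the desired type. Analogous monochromatic-expansion results are the starting point of the bounded-bandwidth theorem of Clemens et al.

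The second part is the embedding lemma: every $n$-vertex graph $H$ with $\tw(H) \le k$ and maximum degree at most $d$ embeds into any such expander $G'$. I would exploit the standard consequence of bounded treewidth that $H$ admits a balanced vertex separator $X$ of size at most $k+1$ whose removal leaves components of size at most $2n/3$; applying this recursively yields a \emph{separator tree} of depth $O(\log n)$ whose leaves correspond to constant-size pieces of $H$. The embedding proceeds by descending this separator tree and, at each node, embedding the current separator into $G'$ while maintaining, for every not-yet-embedded vertex $v$ of $H$, a large \emph{candidate set} of vertices of $G'$ adjacent to the already-embedded neighbours of $v$. The expansion property of $G'$ is used repeatedly to guarantee that candidate sets remain large enough to extend the embedding, in the spirit of the Friedman--Pippenger tree-embedding lemma and its extensions by Haxell.

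The main obstacle is the embedding lemma itself. Bounded-bandwidth graphs are embedded along a linear order in which only a constant-size window of vertices is active at any moment; by contrast, a bounded-treewidth graph may force the algorithm to track many partially completed pieces simultaneously, one for each subtree branching off a separator. Designing the right inductive invariant --- one that quantifies both the remaining structure of $H$ and the current candidate sets in $G'$, and that is preserved whenever a new separator is placed --- is the heart of the argument. Setting aside enough "expansion budget" in $G'$ to accommodate all subsequent recursive calls, while keeping the candidate sets of vertices in different subtrees essentially independent, is the delicate part and will ultimately determine the constant $c(k,d)$.
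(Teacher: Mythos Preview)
Your proposal has a genuine gap at the embedding step. The expansion hypothesis you impose on $G'$ --- that every small $S$ satisfies $|\Gamma_{G'}(S)|\ge (d+1)|S|$ --- is precisely the Friedman--Pippenger condition for embedding \emph{trees}; it says nothing about intersections of neighbourhoods. As soon as $\tw(H)\ge 2$, some vertex $v$ of $H$ will acquire two or more already-embedded neighbours during your procedure, and its candidate set becomes the \emph{common} neighbourhood of their images in $G'$. In your host $G(Cn,C/n)$ the typical degree is the constant $C^2$, while the expected size of the common neighbourhood of any two fixed vertices is $\Theta(1/n)$; almost every pair of vertices has no common neighbour at all. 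One-sided vertex expansion cannot repair this: a constant-degree expander is locally tree-like and simply lacks the short cycles needed to realise the $K_{k+1}$-type local structure that treewidth $k$ forces. The Haxell-type extensions you cite still embed trees (or otherwise assume the host supplies the requisite joint neighbourhoods); they do not manufacture common neighbours in a sparse graph.

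This is exactly why the paper does not attempt to embed $H$ directly into a monochromatic expander. It first uses the structural fact (Lemma~\ref{StrongProduct}) that every bounded-treewidth bounded-degree graph is contained in $T\boxtimes K_m$ for a bounded-degree tree $T$ and bounded $m$. The host is then taken to be $H^3\boxtimes K_r$ for a pseudorandom regular graph $H$, so that the $K_m$ factor is supplied explicitly by planted cliques, and only the \emph{tree} $T$ must be found in an expander. The Ramsey argument runs through an auxiliary ``is there a monochromatic $K_{s,s}$ between blocks?'' colouring, together with a dichotomy (either this auxiliary colouring already contains the truncated tree, or its complement contains a large complete multipartite graph on which Friedman--Pippenger can be applied inside $H$). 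The idea you are missing is this decoupling of the tree part of $H$ from its clique part; without it, an embedding into a sparse host of constant degree cannot work.
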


\thref{thm:tw} implies the above $O(|V(H)|)$ bounds on the size Ramsey number from~\cite{clemens2019}, since powers of paths have bounded treewidth and bounded degree. Powers of complete binary trees are examples of graphs covered  by our theorem, but not covered by any previous results in the
literature. Note that the assumption of bounded degree in \thref{thm:tw}  cannot be dropped in general since, as mentioned above, there are trees of superlinear size Ramsey number~\cite{beck1990}. Furthermore, the lower bound from~\cite{rs2000} implies that an additional assumption on the structure of $H$, such as bounded treewidth, is also necessary.  
We prove \thref{thm:tw} in Section~\ref{sec:FirstTheorem}.

We actually prove an off-diagonal strengthening of \thref{thm:tw}. For graphs $H_1$ and $H_2$, the \emph{size Ramsey number} $\hat{r}(H_1, H_2)$ is the minimum number of edges in a graph $G$ such that every red/blue-colouring of the edges of $G$ contains a red copy of $H_1$ or a blue copy of $H_2$. We prove that if $H_1$ and $H_2$ both have $n$ vertices, bounded degree and bounded treewidth, then $\hat{r}(H_1, H_2)\leq cn$. 
Moreover, we show that there is a host graph that works simultaneously for all such pairs $H_1$ and $H_2$ and that has bounded degree.

\begin{theorem}
\thlab{thm:offdiagonal}
For all integers $k, d\geq 1$ there exists $c=c(k,d)$ such that for every integer $n\geq 1$ there is a graph $G$ with $cn$ vertices and maximum degree $c$, such that for all graphs $H_1$ and $H_2$ with $n$ vertices, maximum degree $d$ and treewidth $k$, every red/blue-colouring of the edges of $G$ contains a red copy of $H_1$ or a blue copy of $H_2$.
\end{theorem}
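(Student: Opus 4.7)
The plan is to take $G$ to be a sparse pseudorandom graph on $cn$ vertices with bounded maximum degree --- for instance, a random $D$-regular graph with $D=D(k,d)$ chosen large enough --- and to argue that any red/blue-colouring of its edges produces a majority colour class containing an expander subgraph into which one can embed every $n$-vertex graph of treewidth at most $k$ and maximum degree at most $d$. Universality over $H_1$ and $H_2$ is then automatic, since the embedding step will not depend on the particular graph being embedded.

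The first ingredient is a robust expansion property of $G$: with high probability, for every edge-subset of size at least $|E(G)|/2$, the resulting subgraph contains a sub-expander $F$ on $\Omega(n)$ vertices in which every $S\subseteq V(F)$ with $|S|\leq \eta|V(F)|$ satisfies $|N_F(S)\setminus S|\geq \alpha|S|$, for constants $\eta,\alpha$ depending only on $k$ and $d$. This kind of edge-deletion robustness is standard for random regular graphs of sufficiently large constant degree and is proved by a union bound over dense subgraphs combined with Chernoff-type concentration. Applied to the majority colour class of any red/blue-colouring, it yields a monochromatic sub-expander $F$.

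The second and main ingredient is an embedding lemma in the spirit of Friedman--Pippenger: every such expander $F$ contains every $n$-vertex graph $H$ of treewidth at most $k$ and maximum degree at most $d$ as a subgraph. Fix a tree-decomposition $(T,\{B_t\}_{t\in V(T)})$ of $H$ with $|B_t|\leq k+1$, root $T$ arbitrarily, and process its nodes in BFS order. A new bag $B_t$ shares at most $k$ vertices with its parent, so at most $k$ of its vertices are already embedded and at most $k+1$ new vertices must be assigned images realising the prescribed adjacencies both within $B_t$ and with the previously embedded vertices. Expansion of $F$ should guarantee that such extensions exist, provided one maintains a large pool of available vertices together with lower bounds on the reservoirs of unused neighbours around each currently-embedded vertex.

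The main obstacle is this embedding lemma: the classical Friedman--Pippenger framework is tailored to trees, where each new vertex is attached to a single image, whereas bounded-treewidth graphs require embedding small cliques and short cycles inside a single bag while simultaneously respecting up to $k$ constraints inherited from the parent bag. The bounded-degree hypothesis is essential here: it ensures that each embedded vertex participates in only a constant number of future adjacency constraints, so the reservoir of candidate extensions shrinks by at most a constant factor per embedded vertex, and a linear-size expander $F$ suffices to accommodate all $n$ vertices of $H$. With the parameters $D$, $\eta$, $\alpha$ chosen sufficiently large in terms of $k$ and $d$, this argument should carry through and yield the theorem.
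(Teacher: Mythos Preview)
There is a genuine gap in the choice of host graph. A random $D$-regular graph on $cn$ vertices with $D$ a constant is locally tree-like: the expected number of triangles is $\Theta(D^3)$, a constant independent of $n$, and more generally the number of short cycles of any fixed length is $O_D(1)$. But the class of $n$-vertex graphs with treewidth $k\ge 2$ and maximum degree $d\ge 2$ includes, for example, a disjoint union of $n/3$ triangles, or $n/(k+1)$ copies of $K_{k+1}$. No colour class of your $G$ can contain linearly many vertex-disjoint triangles, since $G$ itself contains only $O(1)$ of them. So the embedding lemma you propose is simply false for this host: no amount of vertex-expansion will produce a common neighbour of two already-embedded adjacent vertices when the girth is essentially $\Omega(\log n)$. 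The difficulty is not the bookkeeping of reservoirs but the absence of the required local density.

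The paper sidesteps this by \emph{not} trying to embed bounded-treewidth graphs directly into an expander. Instead it first invokes the structure theorem that every such $H$ is a subgraph of $T\boxtimes K_k$ for a bounded-degree tree $T$, and then takes as host $G=\sprod{H^3}{K_{r(t)}}$, where $H$ is an $(N,D,\lambda)$-graph. The blow-up by a clique of constant size supplies the short cycles and small cliques that a bare regular graph lacks, while the tree part is handled via Friedman--Pippenger inside $H$ (not in a monochromatic subgraph of $G$). The argument is then a two-level dichotomy: an auxiliary colouring on blobs either yields a blue truncation $T_2'$, which is lifted to a blue $T_2\boxtimes K_k$ via blue $K_{s,s}$'s between blobs, or it yields a large red complete multipartite structure, from which one extracts matchings and a copy of $T_1$ in $H$, and finally lifts to a red $T_1\boxtimes K_k$ using the K\H{o}v\'ari--S\'os--Tur\'an bound and the Local Lemma. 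Your outline is missing both the blow-up of the host and this entire auxiliary-colouring machinery.
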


The second contribution of this paper fits into the framework of \emph{parameter Ramsey numbers}: for any monotone graph parameter $\rho$, one may ask whether $\min \{\rho(G) : G\to H\}$ can be bounded in terms of $\rho(H)$. This line of research was conceived in the 1970s by Burr, Erd\H{o}s and Lov\'asz~\cite{bel76}. The usual Ramsey number and the size Ramsey number (where $\rho(G) = |V(G)|$ and $\rho(G) = |E(G)|$ respectively) are classical topics. Furthermore, the problem has been studied when $\rho$ is the clique number~\cite{folkman70,  nr76}, chromatic number~\cite{bel76,zhu98}, maximum degree~\cite{hmr14,jmw13} and minimum degree~\cite{bel76,fglps2014,fl2007,szz2010} (the latter requires the additional assumption that the host graph $G$ is minimal with respect to subgraph inclusion, otherwise the problem is trivial).

It is therefore interesting to ask whether   $\min \{\tw(G) : G\to H\}$ can be bounded in terms of $\tw(H)$. Our next theorem shows that the answer is negative, even when replacing treewidth by the weaker notion of degeneracy.  For an integer $d$, a graph $G$ is \emph{$d$-degenerate} if every subgraph of $G$ has minimum degree at most $d$. The \emph{degeneracy} of $G$ is the minimum integer $d$ such that $G$ is $d$-degenerate. 
It is well known and easily proved that every graph with treewidth $w$ is $w$-degenerate, but treewidth cannot be bounded in terms of degeneracy (for example, the 1-subdivision of $K_n$ is 2-degenerate, but has treewidth $n-1$). 

\begin{theorem}\thlab{thm:d-degenerate}
For every $d\ge 1$ there is a tree $T$ such that if $G$ is $d$-degenerate then $G\nto T$.
\end{theorem}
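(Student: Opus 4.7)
The plan is to exhibit, for each $d \geq 1$, a tree $T = T_d$ and a 2-edge-colouring strategy that, when applied to any $d$-degenerate graph $G$, leaves no monochromatic copy of $T_d$. The starting point is the degeneracy ordering of $G$: a vertex ordering $v_1,\ldots,v_n$ in which each $v_i$ has at most $d$ neighbours in $\{v_1,\ldots,v_{i-1}\}$, equivalently an acyclic orientation of $E(G)$ with maximum out-degree at most $d$.

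For the base case $d=1$ (forests), I would take $T_1=P_4$, the path on four vertices. The key ingredient is the classical fact that every forest has star arboricity at most $2$: after rooting each tree component, partition its edges into two star forests according to the parity of the depth of the parent endpoint of each edge. In each part, a vertex is either the centre of the star containing all its child-edges of the appropriate parity, or the leaf of the single star at its parent; hence each part is genuinely a disjoint union of stars. Since a star forest is $P_4$-free (each component has diameter at most $2$), this 2-colouring leaves no monochromatic $P_4$, showing that no forest is Ramsey for $P_4$.

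For general $d\geq 2$, I would take $T_d$ to be constructed iteratively from $T_{d-1}$, for example as a long spine with a copy of $T_{d-1}$ attached at each spine vertex, the spine length and the attachment multiplicity chosen large in terms of $d$. The colouring of a $d$-degenerate $G$ would be defined greedily along the degeneracy ordering: at each vertex $v_i$, distribute its at most $d$ back-edges between the two colours according to a balancing rule aimed at keeping a structural invariant tied to the orientation. The goal is that, inductively, each monochromatic subgraph arising from this colouring behaves with respect to $T_{d-1}$ like a $(d-1)$-degenerate graph, allowing us to apply the inductive hypothesis and conclude that no monochromatic copy of $T_d$ exists.

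The main obstacle is the inductive step. The case $d=1$ is particularly clean because star forests are \emph{exactly} $P_4$-free; for larger $d$, a 2-partition of the edges need not yield such simple monochromatic structure, since even a union of just two star forests can contain arbitrarily long paths. Pinpointing the right tree $T_d$ and the right invariant to maintain during the colouring is thus the heart of the proof. Should the direct induction prove unwieldy, a plausible alternative is to define $T_d$ explicitly and run a structural counting argument: use the acyclic orientation with maximum out-degree $d$ to show that any 2-colouring of $E(G)$ can be perturbed to destroy the dense branching required by a monochromatic copy of $T_d$, exploiting that each vertex has only $d$ ``downward'' edges available.
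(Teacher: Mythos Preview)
Your base case is fine and matches the paper's starting point, but the inductive step is not a proof: you explicitly leave the tree $T_d$, the balancing rule, and the invariant unspecified and flag this as ``the heart of the proof''. Worse, the natural reading of your plan does not close. If your balancing rule merely made each monochromatic subgraph $(d-1)$-degenerate, the inductive hypothesis would only tell you that \emph{that} subgraph admits a further 2-colouring avoiding $T_{d-1}$; composing gives a 4-colouring of $G$, not a 2-colouring. So ``each colour class behaves like a $(d-1)$-degenerate graph'' is not the right target, and the spine-plus-pendant-copies construction of $T_d$ gives you no handle without a concrete invariant.

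The paper's inductive proof fixes this by partitioning \emph{vertices}, not edges. Greedily along the degeneracy order, assign each $v_j$ to whichever of $V_r, V_b$ currently contains at most $\lfloor d/2\rfloor -1$ of its in-neighbours; then $G[V_r]$ and $G[V_b]$ are both $(\lfloor d/2\rfloor -1)$-degenerate. Inductively 2-colour each of these to avoid the complete $d$-ary tree $T_{d,h}$ of height $h$, and colour every cross-edge by the class of its \emph{source} in the orientation. One then takes $T = T_{2d,2h}$: in a supposed red copy, any vertex in $V_b$ has at most $d-1$ in-edges and all its red out-edges go to $V_b$, so it has $d$ children in $V_b$; iterating $h$ times builds a red $T_{d,h}$ inside $G[V_b]$, contradicting the inductive colouring, so the top $h$ levels lie in $V_r$ and give a red $T_{d,h}$ there---again a contradiction. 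The paper also records a short non-inductive argument: take a proper $(d+1)$-vertex-colouring $\phi$, colour edge $uv$ (with $u$ earlier) red iff $\phi(u)<\phi(v)$; monochromatic monotone paths are then short, while any embedded copy of $T_{d+1,d+1}$ contains a long monotone path since every non-leaf has a child later in the order.
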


A positive restatement of \thref{thm:d-degenerate} is that the edges of every $d$-degenerate graph can be 2-coloured with no monochromatic copy of a specific tree $T$ (depending on $d$). This is a significant strengthening of a theorem by Ding, Oporowski, Sanders and Vertigan~\cite[Theorem~3.9]{ding1998}, who  proved that the edges of every graph with treewidth at most $k$ can be $k$-coloured with no monochromatic copy of a certain tree $T$.  We also note that a statement  similar to Theorem~\ref{thm:d-degenerate} does not hold in the online Ramsey setting, see Section~4 in~\cite{cfs2014} for more details. 

Furthermore, \thref{thm:d-degenerate} is tight in the following sense. If $\cG$ is a monotone graph class with unbounded degeneracy, then for every tree $T$, there is a graph $G\in \cG$ such that $G\to T$. Indeed, for a given tree $T$, let $G$ be a graph in $\cG$ with average degree at least $4 |V(T)|$, which exists since $\cG$ is monotone with unbounded degeneracy. In any 2-colouring of $E(G)$, one colour class has average degree at least  $2|V(T)|$. Thus there is a monochromatic subgraph of $G$ with minimum degree at least $|T|$, which contains $T$ as a subgraph by a folklore greedy algorithm.

\section{Tools}

Our proof of \thref{thm:offdiagonal} 
relies on the following characterisation of graphs with bounded treewidth and bounded degree. The {\em strong product} of graphs  $G$ and $H$, denoted by $\sprod{G}{H}$, is the graph with vertex set  $V(G)\times V(H)$, where  $(v_1,u_1)$ is adjacent to $(v_2,u_2)$ in $\sprod{G}{H}$ if $v_1=v_2$ and $u_1u_2\in E(H)$, or $v_1v_2\in E(G)$ and $u_1=u_2$, or $v_1v_2\in E(G)$ and $u_1u_2\in E(H)$. 
    Note that $\sprod{T}{K_k}$ is obtained from $T$ by replacing each vertex by a clique and replacing each edge by a complete bipartite graph.

\begin{lemma} [\cite{do95,Wood09}]
\label{StrongProduct}
    Every graph with treewidth $w$ and maximum degree $d$ is a subgraph of $T \boxtimes K_{18w d}$ for some tree $T$ of maximum degree at most $18w d^2$.
\end{lemma}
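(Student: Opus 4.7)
The plan is to work with the equivalent notion of a \emph{tree-partition}: a pair $(T,(V_x)_{x\in V(T)})$ where $T$ is a tree and $(V_x)_{x\in V(T)}$ is a partition of $V(G)$ such that every edge of $G$ either lies inside some single part $V_x$ or joins $V_x$ and $V_y$ with $xy\in E(T)$. Its \emph{width} is $\max_x|V_x|$. Unpacking the definitions shows that $G\subseteq T\boxtimes K_N$ if and only if $G$ admits a tree-partition of width at most $N$ whose underlying tree is $T$, so it suffices to produce a tree-partition of $G$ of width at most $18wd$ on a tree of maximum degree at most $18wd^2$.

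The main input is the theorem of Ding and Oporowski, subsequently sharpened by Wood: every graph of treewidth $w$ and maximum degree $d$ admits a tree-partition of width at most $18wd$. I would simply quote this result. Its proof is by induction on $|V(G)|$ using a tree decomposition of width $w$: one locates a balanced separator arising from a single bag (of size $\le w+1$) together with its $G$-neighbourhood on the ``heavy'' side, bundles these at most $18wd$ vertices into a single part, recurses on each smaller piece, and glues the resulting tree-partitions back together at the new part. The constants are tuned so that the glueing step does not blow up the width.

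It remains to bound $\Delta(T)$. Assume $G$ is connected (otherwise process each component separately and join the resulting trees by a path of new empty tree-nodes, which is absorbed into the slack in the constants). Starting from the tree-partition produced above, I iteratively delete every leaf $x$ of $T$ with $V_x=\emptyset$. This process cannot create an empty internal node: if $x$ were empty and internal, then $T-x$ would split into at least two subtrees, and the tree-partition property would force every $G$-edge between parts indexed in different subtrees to correspond to a $T$-edge through $x$, which cannot exist since $V_x$ is empty---contradicting connectedness of $G$. The same connectedness argument applied to each remaining tree edge $xy$ shows that at least one $G$-edge joins $V_x$ and $V_y$. Since distinct tree edges at $x$ correspond to $G$-edges leaving $V_x$ through disjoint parts,
\[
\deg_T(x)\ \le\ \bigl|\{uv\in E(G):u\in V_x,\ v\notin V_x\}\bigr|\ \le\ d\cdot|V_x|\ \le\ 18wd^2,
\]
as required.

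The only real obstacle is the $O(wd)$ bound on tree-partition width, which is the technical heart of the Ding--Oporowski/Wood results; the clean-up that yields a bounded-degree tree, and the translation between tree-partitions and strong-product containment, are both essentially immediate.
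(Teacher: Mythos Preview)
The paper does not prove this lemma; it simply cites it from \cite{do95,Wood09}. Your proposal correctly unpacks that citation: the equivalence between $G\subseteq T\boxtimes K_N$ and tree-partitions of width $N$ is immediate, the width bound $18wd$ is exactly the Ding--Oporowski/Wood theorem, and your clean-up argument for $\Delta(T)\le 18wd^2$ (prune empty leaves, then count outgoing $G$-edges from each bag) is the standard and correct way to extract the degree bound. One phrasing quibble: the deletion process does not ``create'' empty internal nodes; what you actually need (and what your connectivity argument does show) is that no empty internal node \emph{survives} the pruning, since any such node would have at least two subtrees containing non-empty bags and hence would disconnect $G$. The handling of disconnected $G$ is slightly hand-wavy but easily made precise by attaching the component trees at leaves.
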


Our host graph $G$ in the proof of \thref{thm:offdiagonal} is obtained from a random $D$-regular graph $H$ on $O(n)$ vertices, for a suitable constant $D$, by taking the third power of $H$, and then replacing every vertex by a clique of bounded size, and by replacing every edge by a complete bipartite graph. To show that $G$ has the desired Ramsey properties we will exploit certain expansion properties of $H$.

An $(N, D, \lambda)$\emph{-graph} is a $D$-regular $N$-vertex graph in which every eigenvalue except the largest one is at most $\lambda$ in absolute value. The existence of graphs with $\lambda = O(\sqrt{D})$ is shown, for instance, by considering a random $D$-regular graph on $N$ vertices, denoted by $G(N, D)$.
    
    \begin{lemma} [\cite{f2008}]
    \thlab{thm:friedman}
    Let $D \geq  3$ be an integer and let $ND$ be even. With probability tending to 1 as $N\to\infty$,  
    every eigenvalue of $G(N, D)$ except the largest one is at most $2 \sqrt{D}$ in absolute value. 
    \end{lemma}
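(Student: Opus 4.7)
The plan is to use the trace (moment) method. Let $A$ be the adjacency matrix of $G(N,D)$, with eigenvalues $D=\lambda_1\ge\lambda_2\ge\dots\ge\lambda_N$, and set $\lambda=\max_{i\ge 2}|\lambda_i|$. For any positive integer $k$, the deterministic bound
\[
\lambda^{2k}\leq \sum_{i\ge 2}\lambda_i^{2k} = \mathrm{tr}(A^{2k})-D^{2k}
\]
holds, and $\mathrm{tr}(A^{2k})$ counts closed walks of length $2k$ in $G(N,D)$. Thus it suffices to bound $\mathbb{E}\,\mathrm{tr}(A^{2k})$ for a suitably growing $k=k(N)$ and apply Markov's inequality.

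To make the expectation tractable I would work in the permutation (or configuration) model, and reduce to the uniform simple model by standard contiguity arguments (the simple event has probability bounded away from zero as $N\to\infty$ for fixed $D$). A closed walk of length $2k$ is grouped by its \emph{shape}: the unlabelled multigraph swept out by the walk, together with the order in which its edges are traversed. If a shape has $v$ vertices and $e$ edges, then in the configuration model the probability that a given injective labelling appears as a walk is of order $N^{-e}$, while the number of labellings is of order $N^v$; shapes therefore contribute a factor of $N^{v-e}=N^{-(e-v)}$. The dominant term comes from tree-like (backtracking) walks, where $e=v-1$, which mimic closed walks in the infinite $D$-regular tree. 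The number of such walks per starting vertex is $C_k(D-1)^k\leq(4D)^k$, where $C_k$ is the $k$-th Catalan number, and after taking $2k$-th roots this matches the target $2\sqrt{D}$ up to a subexponential factor.

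The main obstacle, and the hardest part of Friedman's proof, is showing that walks whose shape has \emph{excess} $e-v+1\geq 1$ (the so-called tangles) contribute negligibly to $\mathbb{E}\,\mathrm{tr}(A^{2k})$ when $k$ is chosen as large as $c\log N$. A crude enumeration of shapes of excess $s$ grows like $k^{O(s)}$, which only beats the $N^{-s}$ factor if $k$ is controlled very carefully. Friedman resolves this via a selective trace argument: one subtracts the contributions of low-complexity tangles and bounds the remainder by a recursive decomposition, which is the technical heart of his memoir. For the purposes of the present paper the sharp constant is immaterial --- only $\lambda=O(\sqrt{D})$ is used later --- so one can instead invoke the simpler Broder--Shamir style estimate, which yields $\lambda \le C\sqrt{D}$ for some absolute constant $C$ while sidestepping the delicate tangle analysis. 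Either way, taking $k=\lfloor c\log N\rfloor$ for a sufficiently small $c>0$ produces the bound $\mathbb{E}\,\mathrm{tr}(A^{2k})\le N(2\sqrt{D})^{2k}(1+o(1))$, and Markov's inequality then gives $\lambda\le 2\sqrt{D}$ with probability tending to one.
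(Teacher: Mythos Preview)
The paper does not prove this lemma at all: it is quoted verbatim as a black box from Friedman's memoir \cite{f2008}, with no argument given. So there is nothing to compare your proposal against on the paper's side.

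As a standalone sketch of Friedman's argument, your outline is broadly accurate --- trace method, reduction to closed-walk enumeration in the configuration model, tree-like walks giving the Catalan/$(D-1)^k$ main term, and the selective trace to kill tangles --- and you are right that the selective-trace step is the deep part that cannot be summarised in a paragraph. One small slip in the final step: from $\mathbb{E}\,\mathrm{tr}(A^{2k})\le N(2\sqrt{D})^{2k}(1+o(1))$ with $k\asymp\log N$, Markov only yields $\lambda\le (2+\epsilon)\sqrt{D}$ a.a.s.\ for each fixed $\epsilon>0$, not $\lambda\le 2\sqrt{D}$ on the nose; the factor of $N$ in the numerator does not disappear unless you allow a multiplicative slack in the threshold. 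This is harmless here because Friedman actually proves the sharper bound $\lambda\le 2\sqrt{D-1}+\epsilon$ a.a.s., and the paper's stated threshold $2\sqrt{D}$ strictly exceeds $2\sqrt{D-1}$ for every $D\ge 3$, so there is room to absorb the $\epsilon$. Your remark that the weaker Broder--Shamir bound $\lambda=O(\sqrt{D})$ would already suffice for the paper's applications is correct and worth noting.
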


    For a graph $G$ and sets $U, W \subseteq V(G)$, let $e(U,W)$ be the number of edges with one endpoint in $U$ and the other one in $W$. Each edge with both endpoints in $U \cap W$ is counted twice. 
    We will use the following well-known estimate on the edge distribution of a graph in terms of its eigenvalues, see, e.g., \cite{ks2006} for a proof.

    \begin{lemma}[\cite{ks2006}]  \thlab{thm:ks:edgedist}
    For every $(N, D, \lambda)$-graph $G$ and for all sets $S, T\subseteq V(G)$,
            $$  \left |   e(S, T) -\frac{D|S||T|}{N}   \right | \leq\
            \lambda \sqrt{|S||T|\left(1 - \frac{|S|}{N} \right)\left(1 - \frac{|T|}{N} \right)}. $$
        \end{lemma}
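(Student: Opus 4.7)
My plan is to prove the bound via the standard spectral decomposition argument. Let $A \in \mathbb{R}^{N \times N}$ denote the adjacency matrix of $G$, and let $\mathbf{1}_S, \mathbf{1}_T \in \mathbb{R}^N$ be the characteristic vectors of $S$ and $T$. The starting point is the identity
$$e(S,T) = \mathbf{1}_S^\top A\, \mathbf{1}_T,$$
which is consistent with the stated convention of counting edges with both endpoints in $S \cap T$ twice, since each such edge contributes both $A_{uv}$ and $A_{vu}$ to the sum while edges with exactly one endpoint in one of the sets contribute only once.

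Next I would use the eigendecomposition of $A$. Because $G$ is $D$-regular, $\mathbf{1}/\sqrt{N}$ is an eigenvector of $A$ with eigenvalue $D$, and every eigenvector orthogonal to $\mathbf{1}$ has eigenvalue of absolute value at most $\lambda$. Projecting onto $\mathbf{1}$ and its orthogonal complement, I write
$$\mathbf{1}_S = \frac{|S|}{N}\mathbf{1} + f \quad \text{and} \quad \mathbf{1}_T = \frac{|T|}{N}\mathbf{1} + g,$$
where $f, g \perp \mathbf{1}$. Using $A\mathbf{1} = D\mathbf{1}$ and the orthogonality of $f$ and $g$ to $\mathbf{1}$, a short computation gives
$$\mathbf{1}_S^\top A\, \mathbf{1}_T = \frac{D|S||T|}{N} + f^\top A g,$$
which cleanly isolates the main term from the error term $f^\top A g$.

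The remaining step, which is the heart of the argument, is to control $f^\top A g$. Since $g$ lies in the orthogonal complement of $\mathbf{1}$, the hypothesis on the eigenvalues gives $\|A g\| \leq \lambda \|g\|$, so Cauchy–Schwarz yields
$$|f^\top A g| \leq \|f\| \cdot \|Ag\| \leq \lambda\, \|f\|\, \|g\|.$$
A direct calculation using $\|f\|^2 = \|\mathbf{1}_S\|^2 - (|S|/N)^2 \|\mathbf{1}\|^2 = |S|(1-|S|/N)$, and symmetrically for $g$, produces exactly the advertised inequality.

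I do not anticipate any substantive obstacle: the argument is completely standard. The only places that require a modicum of care are verifying the double-counting convention in the initial matrix identity for $e(S,T)$, ensuring that the $D$ coming from the top eigenvalue lands in the main term rather than the error, and computing the norms of the orthogonal projections $f$ and $g$ correctly so that the $(1 - |S|/N)(1 - |T|/N)$ factor emerges.
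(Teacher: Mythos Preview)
Your argument is correct and is precisely the standard spectral proof of the expander mixing lemma. Note that the paper does not actually supply a proof of this lemma; it merely cites \cite{ks2006}, whose proof is exactly the one you have outlined.
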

    
    The key tool that we use is the following implicit result of Friedman and Pippenger~\cite{fp1987}, which shows that every $(N, D, \lambda)$-graph with the appropriate parameters is `robustly universal' for bounded-degree trees.   Let $\mathcal{T}_{n,d}$ be the set of all trees with $n$ vertices and maximum degree at most $d$. The next lemma follows implicitly from the proofs of Theorems 2 and 3 in~\cite{fp1987}.
    
    \begin{lemma}[\cite{fp1987}] \thlab{thm:fp:vertex}  
Let $\eps>0$ and $d,n$ be  integers. Let  $D$ and $N$ be integers such that  $D > 100d^2 / \varepsilon^4$   and $N >10 d^2 n / \varepsilon^2$, and let $G$ be an $(N, D, \lambda)$-graph with $\lambda = 2\sqrt{D}$.  Then every induced subgraph of $G$ on at least~$\eps N$ vertices contains every tree in $\mathcal{T}_{n,d}$.
    \end{lemma}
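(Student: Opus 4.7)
The plan is to fix $V' \subseteq V(G)$ with $|V'| \geq \eps N$, set $H := G[V']$, and verify that $H$ satisfies the vertex-expansion condition used (implicitly) in the proofs of Theorems~2 and~3 of~\cite{fp1987}, namely an estimate of the form $|N_H(U)\setminus U| \geq c_d |U|$ for an appropriate constant $c_d$ depending on $d$, valid for every $U \subseteq V'$ whose size is at most a suitable multiple of $n$. Once this expansion is available, the Friedman--Pippenger tree-embedding algorithm, which greedily builds the embedding while maintaining a reservoir of candidate images at each already-embedded vertex, places any tree in $\mathcal{T}_{n,d}$ into $H$ as a black box.

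The verification of the expansion splits into two regimes according to the size of $U$. When $U$ is small, I first apply \thref{thm:ks:edgedist} to the pair $(\{v\}, V\setminus V')$ for each $v\in V'$: this gives $e_G(\{v\}, V\setminus V') \leq D(1-\eps) + \lambda\sqrt{1-\eps} \leq (1-\eps)D + 2\sqrt{D}$, so every vertex of $V'$ has at least $\eps D - 2\sqrt{D} \geq \eps D/2$ neighbors inside $V'$, where the assumption $D > 100 d^2/\eps^4$ makes the error $2\sqrt{D}$ negligible. A routine double-counting estimate (each vertex in $N_H(U)\setminus U$ contributes at most $|U|$ edges to $U$) then yields $|N_H(U)\setminus U| \geq \eps D/2 - |U|$, which exceeds $c_d|U|$ for all $|U|$ below a threshold of order $\eps D/d$.

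When $U$ is larger, set $W := V'\setminus(U\cup N_H(U))$, so that $e_G(U,W)=0$ by construction. Applying \thref{thm:ks:edgedist} to the pair $(U,W)$ gives $|U||W| \leq \lambda^2 N^2/D^2 = 4N^2/D$, and therefore
$$ |N_H(U)\setminus U| \geq \eps N - |U| - \frac{4N^2}{D|U|}. $$
A short quadratic analysis shows that the right-hand side exceeds $c_d|U|$ throughout a wide interval in $|U|$ whose upper endpoint is of order $\eps N/d$ and whose lower endpoint sits below the threshold produced in the previous regime, where the two hypotheses $D > 100 d^2/\eps^4$ and $N > 10 d^2 n/\eps^2$ together control the two ends of this interval.

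The main obstacle will be the bookkeeping to ensure that the two regimes together cover the full range of $|U|$ required by the Friedman--Pippenger argument, with no gap in the middle: the specific constants $100$ and $10$ in the hypotheses are tuned precisely so that the small-$|U|$ threshold $\eps D/(c_d d)$ meets the lower endpoint $\sim 4N/(D\eps)$ of the large-$|U|$ positivity interval. Once the expansion is established throughout the needed range, invoking the Friedman--Pippenger embedding procedure finishes the proof; the only subtlety beyond these estimates is matching the exact form of the expansion condition that is actually used inside the proofs of Theorems~2 and~3 in~\cite{fp1987}.
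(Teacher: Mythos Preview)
The paper does not actually prove this lemma: it records the statement and says that it ``follows implicitly from the proofs of Theorems~2 and~3 in~\cite{fp1987}''. So there is no in-paper argument to compare against; your outline---verify that $H=G[V']$ satisfies the Friedman--Pippenger expansion hypothesis using the spectral information, then invoke their embedding machinery---is precisely the intended route.

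That said, your small-$U$ regime contains a genuine error. Applying \thref{thm:ks:edgedist} with $S=\{v\}$ and $T=V\setminus V'$ gives an error term
\[
\lambda\sqrt{|S|\,|T|\left(1-\tfrac{|S|}{N}\right)\left(1-\tfrac{|T|}{N}\right)} \;\approx\; 2\sqrt{D}\cdot\sqrt{\eps(1-\eps)N},
\]
which is of order $\sqrt{DN}$, not $2\sqrt{D}$ as you wrote. Since $D$ is a constant while $N$ grows, this swamps the main term $D(1-\eps)$ and gives no usable bound on the degree of a single vertex into $V'$. Consequently your claim that every $v\in V'$ has at least $\eps D/2$ neighbours in $V'$ is not established, and the small-$U$ expansion is left open. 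Your large-$U$ argument is correct, but (as your own quadratic analysis shows) it only kicks in once $|U|\gtrsim N/(\eps D)$, which is a growing function of $N$; so there is an honest gap for $|U|$ between $1$ and that threshold.

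The standard repair is not to use the edge-mixing lemma on singletons at all, but to invoke the spectral \emph{vertex}-expansion bound (Tanner's inequality): in an $(N,D,\lambda)$-graph, every set $U$ satisfies $|U\cup\Gamma_G(U)|\ge |U|\big/\big((\lambda/D)^2+(1-(\lambda/D)^2)|U|/N\big)$, which for $\lambda=2\sqrt{D}$ and small $|U|$ gives expansion by a factor of order $D/4$. One then shows that at most $(1-\eps)N$ of these neighbours can lie outside $V'$, which for $|U|$ below the large-$U$ threshold still leaves enough inside $V'$. Matching the two regimes then goes through with the stated constants. Alternatively, one can argue as in \cite{fp1987} directly rather than reducing to \thref{thm:ks:edgedist}.
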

    
We summarise the above results in the following lemma. 

     \begin{lemma} \label{lem:originalgraph}
        For every integer $d$, every $\varepsilon>0$ and for all even $D > 100d^2 / \varepsilon^4$ there exists $c$ such that for all integers $n, N$ with $N \geq c n$ there exists an $N$-vertex $D$-regular graph $H$ with the following properties: 
        \begin{itemize}
           \item[(1)] For every pair of disjoint sets $S, T\subseteq V(H)$ with $ |S|, |T|\geq 2N / \sqrt{D} $ we have  $e(S,T)>0$.
           \item[(2)] Every induced subgraph of $H$ on at least $\varepsilon N$ vertices contains every tree in $\mathcal{T}_{n,d}$.
       \end{itemize}
    \end{lemma}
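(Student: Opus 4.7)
The plan is to let $H$ be a random $D$-regular graph $G(N,D)$ and verify both conclusions deterministically once the spectral bound $\lambda \leq 2\sqrt{D}$ holds. Since $D$ is even by hypothesis, $ND$ is automatically even for every $N$, so \thref{thm:friedman} gives that $G(N,D)$ is an $(N,D,2\sqrt{D})$-graph with probability tending to $1$ as $N\to\infty$. I would choose $c=c(d,\varepsilon,D)$ large enough so that for every $N \geq cn$ we have both $N > 10d^2 n/\varepsilon^2$ (as required by \thref{thm:fp:vertex}) and $N$ exceeds whatever threshold makes the Friedman probability positive. For every such $N$, fix a graph $H$ in the support of $G(N,D)$ satisfying the spectral condition.

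Property~(2) is then immediate from \thref{thm:fp:vertex}, since the assumptions $D > 100d^2/\varepsilon^4$ and $N > 10d^2n/\varepsilon^2$ are precisely its hypotheses, and the conclusion matches what we want to prove.

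For property~(1), I would plug into \thref{thm:ks:edgedist} with $\lambda=2\sqrt{D}$. Given disjoint $S,T\subseteq V(H)$ with $|S|,|T|\geq 2N/\sqrt{D}$, we have $\sqrt{|S||T|}\geq 2N/\sqrt{D}$, hence
$$\frac{D|S||T|}{N} \;\geq\; 2\sqrt{D}\,\sqrt{|S||T|} \;=\; \lambda\sqrt{|S||T|}.$$
Because $S$ and $T$ are disjoint and nonempty, both are proper subsets of $V(H)$, so $(1-|S|/N)(1-|T|/N) < 1$. Combining with \thref{thm:ks:edgedist} yields
$$e(S,T) \;\geq\; \frac{D|S||T|}{N} - \lambda\sqrt{|S||T|\bigl(1-\tfrac{|S|}{N}\bigr)\bigl(1-\tfrac{|T|}{N}\bigr)} \;>\; 0.$$

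There is no real obstacle; the lemma is essentially a bookkeeping exercise assembling \thref{thm:friedman}, \thref{thm:ks:edgedist}, and \thref{thm:fp:vertex}. The only subtle point is the strictness in property~(1), which is why the disjointness hypothesis is needed: it rules out $|S|=N$ or $|T|=N$ and forces the correction factor $(1-|S|/N)(1-|T|/N)$ to be strictly less than $1$, turning the nonstrict estimate into $e(S,T)>0$.
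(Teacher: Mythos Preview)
Your proposal is correct and follows essentially the same approach as the paper: invoke \thref{thm:friedman} to obtain an $(N,D,2\sqrt{D})$-graph, deduce property~(2) directly from \thref{thm:fp:vertex}, and derive property~(1) from \thref{thm:ks:edgedist}. You are in fact slightly more careful than the paper about the strict inequality in~(1), using disjointness to force the correction factor below $1$; the paper simply asserts $\lambda\sqrt{|S||T|} < D|S||T|/N$ without comment.
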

    
\begin{proof}
    Let  $D > 100d^2 / \varepsilon^4$ be an even integer and $N >10 d^2 n / \varepsilon^2$. Let $H$ be an $(N, D, \lambda)$-graph where $\lambda = 2\sqrt{D}$,  which exists by~\thref{thm:friedman}. Property (2) follows from~\thref{thm:fp:vertex}.    Moreover, for all sets $S , T\subseteq V(H)$ with $ |S|, |T|\geq 2N / \sqrt{D} $ we have $\lambda \sqrt{|S||T|} <  \frac{D |S| |T| }{N}$, which implies  $e(S, T)>0$ by  \thref{thm:ks:edgedist}, as desired.
\end{proof}

We also need the following lemma of Friedman and Pippenger~\cite{fp1987}. For a graph $H$ and $X\subseteq V(H)$, let $\Gamma_H(X)$ be the set of vertices in $V(H)$  
adjacent to some vertex in $X$. 

\begin{lemma}[\cite{fp1987}]\thlabel{lem:friedman} 
If $H$ is a non-empty graph such that  for each $X\subseteq V(H)$ with $1\leq |X|\leq 2n-2$, 
$$|\Gamma_H(X)|\geq (d+1)|X|$$
then $H$ contains every tree in $\mathcal{T}_{n,d}$.
\end{lemma}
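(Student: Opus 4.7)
The approach is induction on $n$, but we strengthen the statement before inducting. The naive induction fails: after embedding $T - \ell$ for a leaf $\ell$ with neighbour $p$, the weak inductive hypothesis does not guarantee that $\phi(p)$ has an unused neighbour in $V(H)$ that we can take as $\phi(\ell)$. The standard remedy is to track a \emph{reservoir} of available extension vertices at each embedded vertex.

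More precisely, say that a partial embedding $\phi$ of a subtree $T' \subseteq T$ is \emph{extendable} if for every $v \in V(T')$ there is a set $R_v \subseteq \Gamma_H(\phi(v)) \setminus \phi(V(T'))$ with $|R_v| \ge \deg_T(v) - \deg_{T'}(v)$, i.e., the reservoir at $v$ is large enough to host the not-yet-embedded neighbours of $v$ in $T$. The refined claim we prove by induction on $|V(T')|$ is that extendability can be maintained as we add vertices of $T$ to the embedding one by one.

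The plan is as follows. Initialise by mapping an arbitrary root $r \in V(T)$ to an arbitrary vertex $u \in V(H)$; the hypothesis applied to $X = \{u\}$ yields $|\Gamma_H(u)| \ge d + 1 \ge \deg_T(r)$, so extendability holds at the start. At each inductive step, pick a vertex $v \in V(T) \setminus V(T')$ whose unique neighbour $p$ in $T'$ is already embedded, and take $\phi(v) \in R_p$. One must then refresh the reservoirs of every $w \in V(T')$ for which $\phi(v)$ belonged to $R_w$, and set up a fresh reservoir for the newly embedded vertex $v$. This is done by applying the expansion hypothesis to the set $X$ consisting of the current image $\phi(V(T'))$ together with the existing reservoirs; the bound $|X| \le 2n - 2$ is precisely what makes this union fit inside the hypothesis, and the factor $d + 1$ (rather than $d$) gives the slack needed to extract enough previously unused neighbours of each relevant vertex to rebuild all the depleted reservoirs.

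The main obstacle is the bookkeeping around the reservoir update: when $\phi(v)$ is chosen it may be simultaneously present in many $R_w$, so several reservoirs can shrink in one step and all must be replenished from the same pool of fresh vertices in $V(H) \setminus (\phi(V(T')) \cup \bigcup_w R_w)$. Verifying that the $(d+1)$-expansion provides enough room to do this for every affected $w$, without collisions and without the cumulative image-plus-reservoir set ever exceeding the threshold $2n - 2$, is the technical core of the argument and is exactly what the hypothesis was designed to supply.
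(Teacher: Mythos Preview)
The paper does not prove this lemma; it is quoted as a black box from Friedman and Pippenger~\cite{fp1987}. So there is no ``paper's own proof'' to compare against, and the question is only whether your sketch is a viable route to the result.

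Your high-level plan---grow the embedding one leaf at a time while maintaining an invariant that guarantees further extension---is the right shape, and it is indeed the shape of the Friedman--Pippenger argument. However, the specific invariant you propose (an explicit reservoir $R_v\subseteq\Gamma_H(\phi(v))\setminus\phi(V(T'))$ of size at least $\deg_T(v)-\deg_{T'}(v)$ for each embedded $v$, replenished by applying the expansion hypothesis to the single set $X=\phi(V(T'))\cup\bigcup_w R_w$) does not work as stated. Knowing that $|\Gamma_H(X)|\ge(d+1)|X|$ tells you there are many vertices outside $X$ adjacent to \emph{something} in $X$, but gives no information about $\Gamma_H(\phi(w))$ for a particular $w$ whose reservoir just lost a vertex. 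It is entirely possible that every neighbour of $\phi(w)$ already lies in $\phi(V(T'))\cup R_w$, so $R_w$ cannot be refilled. Applying expansion to $\{\phi(w)\}$ alone only gives $|\Gamma_H(\phi(w))|\ge d+1$, which is useless once $|T'|>d+1$.

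What Friedman and Pippenger actually maintain is a Hall-type condition over \emph{all} small sets simultaneously: an embedding $\phi$ of $T'\subseteq T$ is declared \emph{good} if for every $X\subseteq V(H)$ with $|X|\le 2n-2$ one has
\[
|\Gamma_H(X)\setminus\phi(V(T'))|\;\ge\;\sum_{v\in T':\,\phi(v)\in X}\bigl(\deg_T(v)-\deg_{T'}(v)\bigr).
\]
The expansion hypothesis makes the empty embedding good, and the heart of the proof is a short but genuinely non-obvious argument showing that if $\phi$ is good and $p\in V(T')$ has an unembedded child $\ell$, then \emph{some} choice of $\phi(\ell)\in\Gamma_H(\phi(p))\setminus\phi(V(T'))$ keeps the embedding good. (One assumes every choice fails, takes for each candidate $y$ a witnessing bad set $X_y$, and derives a contradiction by examining their union or a carefully chosen one among them.) This is the ``technical core'' you allude to, but it is not the bookkeeping exercise your sketch suggests; the per-vertex reservoir picture has to be replaced by this global defect-Hall invariant for the induction to go through.
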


Finally, we need the following standard tools.

\begin{lemma}[K\"{o}vari, S\'{o}s, Tur\'{a}n~\cite{KST1954}]
\label{KST}
Every graph  with $n$ vertices and no $K_{s,s}$ subgraph has at most $(s-1)^{1/s} n^{2-1/s} + (s-1)$ edges. 
\end{lemma}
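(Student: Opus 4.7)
The plan is the classical double-counting argument of K\H{o}v\'{a}ri, S\'{o}s and Tur\'{a}n, applied to pairs $(v, S)$ where $v \in V(G)$ and $S$ is an $s$-element subset of $N_G(v)$. Counted by $v$ first, the number of such pairs equals $\sum_{v \in V(G)}\binom{d(v)}{s}$, where $d(v)$ denotes the degree of $v$. Counted by $S$ first, the key observation is that no $s$-subset $S \subseteq V(G)$ can have $s$ or more common neighbours in $G$, for otherwise $S$ together with $s$ of its common neighbours would span a copy of $K_{s,s}$, contradicting the hypothesis. Hence the total count is at most $(s-1)\binom{n}{s}$, yielding
$$
\sum_{v \in V(G)}\binom{d(v)}{s} \;\le\; (s-1)\binom{n}{s}.
$$

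Next, I apply Jensen's inequality to the convex function $x \mapsto \binom{x}{s}$ (extended by $0$ for $x < s-1$). Writing $\bar d = 2|E(G)|/n$ for the average degree of $G$, this yields $n\binom{\bar d}{s} \le (s-1)\binom{n}{s}$. The regime $\bar d < s-1$ is handled by the trivial bound $|E(G)| < (s-1)n/2$, which already implies the stated inequality, so I may assume $\bar d \ge s-1$. Then the elementary estimates $\binom{\bar d}{s} \ge (\bar d - s + 1)^s/s!$ and $\binom{n}{s} \le n^s/s!$ reduce the inequality to $(\bar d - s + 1)^s \le (s-1)\, n^{s-1}$. Taking $s$-th roots and rearranging gives $\bar d \le (s-1)^{1/s} n^{1-1/s} + (s-1)$, which translates to the claimed bound on $|E(G)| = n\bar d/2$ after a routine comparison of the remaining linear and polynomial terms.

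The argument is entirely standard; there is no genuine obstacle. The only step requiring mild care is the bookkeeping that converts the binomial coefficient inequality into the explicit polynomial form in the statement, together with the separate (and trivial) treatment of the low-average-degree case. The whole combinatorial content of the lemma is concentrated in the single observation that any $s$ vertices share at most $s-1$ common neighbours, which is forced directly by the $K_{s,s}$-freeness assumption.
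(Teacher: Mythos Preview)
The paper does not supply its own proof of this lemma; it is quoted as the classical K\H{o}v\'{a}ri--S\'{o}s--Tur\'{a}n theorem with a citation to~\cite{KST1954} and used as a black box. Your argument is exactly the standard double-counting proof from that source, and it is correct: the crucial point that every $s$-subset of $V(G)$ has at most $s-1$ common neighbours is precisely what $K_{s,s}$-freeness gives, and the subsequent Jensen/convexity step together with the elementary bounds $\binom{\bar d}{s}\ge (\bar d-s+1)^s/s!$ and $\binom{n}{s}\le n^s/s!$ are routine.

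One small remark on the final bookkeeping you allude to. Your chain yields $|E(G)|\le \tfrac{1}{2}(s-1)^{1/s}n^{2-1/s}+\tfrac{1}{2}(s-1)n$, and to reach the paper's stated form $(s-1)^{1/s}n^{2-1/s}+(s-1)$ one must check that $\tfrac{1}{2}(s-1)n-(s-1)\le \tfrac{1}{2}(s-1)^{1/s}n^{2-1/s}$, which reduces to $(s-1)^{1-1/s}(n-2)\le n^{2-1/s}$ and holds for $n\ge s-1$ since $(s-1)^{1-1/s}\le n^{1-1/s}$. This is indeed routine, as you say, but it is worth noting that the paper's constant is not the sharpest form of the theorem---it drops the factor $\tfrac12$ on the leading term precisely to absorb the linear lower-order term into the additive $(s-1)$.
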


\begin{lemma}[Lov\'{a}sz Local Lemma~\cite{EL75}]
\label{LLL}
Let $\mathcal{E}$ be a set of events in a probability space, each with probability at most $p$, and each mutually independent of all except at most $d$ other events in $\mathcal{E}$. If $4pd\leq 1$ then with positive probability no event in $\mathcal{E}$ occurs. 
\end{lemma}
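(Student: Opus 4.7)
The plan is to prove the stronger statement that for every event $A \in \mathcal{E}$ and every subset $S \subseteq \mathcal{E}\setminus\{A\}$,
\[
\Pr\!\Bigl[A \,\Big|\, \bigcap_{B\in S}\bar B\Bigr] \le 2p,
\]
by induction on $|S|$. Once this is established, a chain-rule expansion gives
\[
\Pr\!\Bigl[\bigcap_{A\in \mathcal{E}}\bar A\Bigr] = \prod_{i} \Pr\!\Bigl[\bar A_i \,\Big|\, \bigcap_{j<i}\bar A_j\Bigr] \ge (1-2p)^{|\mathcal{E}|} > 0,
\]
which uses $2p\le 1/(2d)<1$ (since $d\ge 1$), so that the final product is strictly positive. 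The base case $S=\emptyset$ is immediate from $\Pr[A]\le p\le 2p$.

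For the induction step, split $S = S_1 \cupdot S_2$, where $S_1$ consists of those $B\in S$ on which $A$ depends (so $|S_1|\le d$), and $S_2$ contains the events mutually independent of $A$. Then write
\[
\Pr\!\Bigl[A \,\Big|\, \bigcap_{B\in S}\bar B\Bigr] = \frac{\Pr\!\Bigl[A \cap \bigcap_{B\in S_1}\bar B \,\Big|\, \bigcap_{B\in S_2}\bar B\Bigr]}{\Pr\!\Bigl[\bigcap_{B\in S_1}\bar B \,\Big|\, \bigcap_{B\in S_2}\bar B\Bigr]}.
\]
The numerator is bounded above by $\Pr[A \mid \bigcap_{B\in S_2}\bar B] = \Pr[A]\le p$, using mutual independence of $A$ from the events in $S_2$. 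For the denominator, apply a union bound and the inductive hypothesis to each $B\in S_1$ (with a strictly smaller conditioning set obtained by removing $B$ from $S$) to get
\[
\Pr\!\Bigl[\bigcap_{B\in S_1}\bar B \,\Big|\, \bigcap_{B\in S_2}\bar B\Bigr] \ge 1 - \sum_{B\in S_1}\Pr\!\Bigl[B \,\Big|\, \bigcap_{B\in S_2}\bar B\Bigr] \ge 1 - 2p\cdot |S_1| \ge 1 - 2pd \ge \tfrac12,
\]
where the last inequality uses the hypothesis $4pd\le 1$. Combining the two bounds yields $\Pr[A \mid \bigcap_{B\in S}\bar B] \le p/(1/2) = 2p$, completing the induction.

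The main subtlety is making sure the induction on $|S|$ is set up correctly: when bounding the denominator, each term $\Pr[B \mid \bigcap_{B'\in S_2}\bar{B'}]$ conditions on a set $S_2$ of strictly smaller size than $S$, so the inductive hypothesis applies. A minor technical point is the edge case $d=0$, where all events are mutually independent and the statement is trivial; and one should verify that if the denominator above involves $\Pr[\bigcap_{B\in S_2}\bar B]=0$, then the conditional probability on the left-hand side is either undefined or can be handled by passing to the largest $S$ with positive probability. Beyond this, the argument is routine.
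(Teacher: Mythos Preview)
Your proof is correct; it is the standard inductive argument for the symmetric Lov\'asz Local Lemma. The paper does not give its own proof of this lemma---it is stated as a tool and attributed to~\cite{EL75}---so there is nothing to compare against. One small wording quibble: in the denominator bound you say the inductive hypothesis is applied ``with a strictly smaller conditioning set obtained by removing $B$ from $S$'', but the displayed inequality actually conditions on $S_2$, not on $S\setminus\{B\}$; since $|S_2|<|S|$ whenever $S_1\neq\emptyset$ the induction still applies, so this is only a cosmetic mismatch.
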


\section{Proof of \thref{thm:offdiagonal}}
\label{sec:FirstTheorem}

We start with the following lemma that states that if a graph does not contain all trees in~$\mathcal{T}_{n,d}$ then its complement contains a complete multipartite subgraph where the parts have `large' size.  
In fact, our proof shows that if the second assertion does not hold, (i.e.~there is no complete multipartite graph with large parts in the complement), then the graph contains a `large' expander as a subgraph. The containment of every tree in $\mathcal{T}_{n, d}$ then follows from \thref{lem:friedman}. 
Statements of similar flavour are also proved and utilised in~\cite{POKROVSKIY2018, POKROVSKIY2017384}. 

\begin{lemma}\label{aux:treeorpartite}
Fix integers $n, d, q$ and let  $N \geq 20ndq$. In every red/blue-colouring of $E(K_N)$ there is either a blue copy of every tree in $\mathcal{T}_{n,d}$, or a red copy of a complete $q$-partite graph in which every part has size at least $\frac{N}{5dq}$. 
\end{lemma}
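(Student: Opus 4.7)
The plan is to invoke the Friedman--Pippenger lemma (\thref{lem:friedman}) as a dichotomy: for any vertex subset $U\subseteq V(K_N)$, either the blue subgraph $B[U]$ satisfies $|\Gamma_{B[U]}(X)|\ge (d+1)|X|$ for every $X\subseteq U$ with $1\le|X|\le 2n-2$, in which case \thref{lem:friedman} produces every tree in $\mathcal{T}_{n,d}$ inside $B[U]\subseteq B$ and establishes the first alternative of the lemma; or there is a ``witness'' $X\subseteq U$ with $|X|\le 2n-2$ and $|\Gamma_{B[U]}(X)|<(d+1)|X|$, i.e.\ a small set whose blue neighbourhood in $U$ is small. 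We use such witnesses iteratively to grow the parts of a red complete $q$-partite subgraph.

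The construction builds the $q$ parts $S_1,\dots,S_q$ in $q$ outer rounds. Let $M:=\lceil N/(5dq)\rceil$ and maintain an available vertex set $V$, initially $V(K_N)$. In round $i$, set $U:=V$ and $S_i:=\emptyset$ and repeat the following while $|S_i|<M$: check the expansion condition on $B[U]$; if it holds, apply \thref{lem:friedman} and stop with the first alternative; otherwise take a witness $X$, update $S_i\leftarrow S_i\cup X$, and delete $X\cup\Gamma_{B[U]}(X)$ from $U$. Once $|S_i|\ge M$, set $V:=U$ and pass to round $i+1$.

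The key structural invariant, preserved by the deletion of $\Gamma_{B[U]}(X)$ at every step, is that the current $U$ contains no blue neighbour of $S_i$. Since $U$ only shrinks (both within a round and across the outer loop), this invariant persists through all subsequent rounds as well, so the final sets $S_1,\dots,S_q$ are pairwise blue-independent and hence span a red complete $q$-partite subgraph with $|S_i|\ge N/(5dq)$ as required.

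The main obstacle will be the quantitative bookkeeping: each witness has size at most $2n-2$, so many absorptions may be needed to fill a single part, and one must ensure that $|U|$ remains large enough throughout for the dichotomy to be meaningful (roughly $|U|\gtrsim (d+1)(2n-2)$). Each absorption shrinks $|U|$ by at most $(d+2)|X|$, so a complete round shrinks $|V|$ by at most $(d+2)(M+2n-2)$. Summing this over the $q$ rounds and using $M\le N/(5dq)+1$ together with the hypothesis $N\ge 20ndq$, a short computation shows that the total shrinkage of $V$ is bounded by a constant fraction of $N$ strictly less than $1$, so $|U|$ remains of order $N/(dq)$ throughout, comfortably above the threshold. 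Hence the procedure either terminates normally with all $q$ parts constructed and yields the red $q$-partite subgraph, or else the expansion horn of the dichotomy fires at some intermediate step and delivers the blue family of trees.
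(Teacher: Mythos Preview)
Your proposal is correct and follows essentially the same approach as the paper: repeatedly apply the Friedman--Pippenger dichotomy to peel off small witness sets $X$ with small blue neighbourhoods, observe that distinct witnesses are completely red to one another, and group the witnesses into $q$ parts of size at least $N/(5dq)$. The only cosmetic difference is that the paper first extracts all witnesses $X_1,\dots,X_m$ until $V(K_N)$ is exhausted (obtaining $\sum_i |X_i|>N/(d+2)$) and then merges consecutive $X_i$'s into the parts $Y_1,\dots,Y_q$, whereas you interleave the grouping with the extraction and build one part at a time; the underlying idea and the arithmetic are the same.
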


\begin{proof}
Let $G$ be the spanning subgraph of $K_N$ consisting of all the blue edges. We may assume that $G$ does not contain every tree in $\mathcal{T}_{n, d}$. 
By \thref{lem:friedman}, for every non-empty set $S\subseteq V(G)$, there exists $X\subseteq S$ such that $1\leq |X|\leq 2n-2$ and $|\Gamma_{G[S]}(X)|<(d+1)|X|$. 
Note that for such $S$ and $X$, all the edges of $K_N$ between $X$ and $S\setminus(X\cup \Gamma_{G[S]}(X))$ must be red. 
Let $S_1, S_2, \dots, S_{m+1}$ and $X_1,X_2,\dots,X_{m}$ be sets of vertices in $G$ such that $S_1=V(G)$ and $S_{m+1}=\emptyset$, and 
for $1\leq i\leq m$:
\begin{itemize}
\item $X_i\subseteq S_i$ with $1\leq |X_i|\leq 2n-2$ and $|\Gamma_{G[S_i]}(X_i)|<(d+1)|X_i|$, and 
\item $S_{i+1} = S_i \setminus ( X_i \cup \Gamma_{G[S_i]}(X_i))$. \end{itemize}
We stop when $S_{m+1} = \emptyset$. Note that $X_1,X_2,\dots,X_{m}$ are pairwise disjoint. 
Since all the edges of $K_N$ between $X_i$ and $S_{i+1}$ are red, all the edges between distinct $X_i$ and $X_j$ are red. Let $X=\bigcup_{i=1}^m{X_i}$. Note that 
$$N=\sum_{i=1}^m|X_i \cup \Gamma_{G[S_i]}(X_i)|< \sum_{i=1}^m(d+2)|X_i| = (d+2)|X|.$$
Thus $|X| > \frac{N}{d+2}$. 

We now combine the parts $X_i$ to reach the required size. Let $Y_1 = X_1 \cup X_2 \cup \dots \cup X_j$, where $j$ is the minimal index such that $|X_1 \cup X_2 \cup \dots \cup X_j| \geq \frac{N}{5dq}$. 
Since $|X_i|\leq 2n-2<\frac{N}{10dq}$, we have the upper bound, $|Y_1| < \frac{3N}{10dq}$. Repeating  the same argument starting at $X_{j+1}$, and noting that $|X| > \frac{N}{d+2}\geq  q \cdot \frac{3N}{10dq}$, we construct  $Y_1, Y_2, \dots Y_q$, satisfying $ |Y_i| \geq \frac{N}{5dq}$ and such that all edges between any distinct $Y_i$ and  $Y_j$ are red.

\end{proof}

Let $T$ be a rooted tree with root $r$. For each vertex $v$ of $T$, let $p_T(v)$ denote the \emph{parent} of $v$, where for convenience we let $p_T(r)=r$. Let $p^2_T(v)$ denote the \emph{grandparent} of $v$; that is, $p^2_T(v)=p_T(p_T(v))$. We denote the set of \emph{children} of $v$ by $C_T(v)$, and define $C_T^2(v)=C_{T}(v) \cup \left( \bigcup_{x\in C_T(v)}{C_{T}(x)}\right)$ to be the set of children and \emph{grandchildren} of $v$. Let $d_T(v)$ be the distance between $r$ and $v$, that is,  the number of edges on the path from $r$ to $v$. For each integer $i$, let $L_i(T)$ be the set of vertices $v$ with $d_T(v)=i$.   In the above definitions, we may omit the subscript $T$ if $T$ is clear from the context.

Given a tree $T$ rooted at $r$, define another tree $T'$  rooted at $r$ as follows. The vertex set of $T'$ is defined to be $\{r\}\cup\bigcup_{i\geq 0} L_{2i+1}(T)$. A pair $vw$ with $v, w \in V(T')$ is an edge of $T'$ if $p^2_T(v)=w$ or $p^2_T(w)=v$. In particular, $C_{T'}(r)=C_T(r)$.. We call $T'$  the \emph{truncation} of $T$. Note that if $T$ has maximum degree $d$, then $T'$ has maximum degree at most $d^2$.

Let $s$ and $m$ be  integers. Suppose we are given a graph $G$, a vertex partition $(V_1, V_2, \dots, V_m)$ of $G$, and an edge-colouring $\psi:E(G)\rightarrow \{\red,\blue\}$. Define an auxiliary colouring of the complete graph $K_m$ with vertex set $[m]$ as follows. For distinct $i,j\in[m]$,  colour the edge $ij$ blue if there is a blue $K_{s,s}$  between $V_i$ and $V_j$ in $G$, and red otherwise. We call this edge-colouring the \emph{$(G,\psi,s)$-colouring of $K_m$.} This auxiliary coloring also appears in~\cite{allenetal}, and subsequently in~\cite{clemens2019}.

\begin{lemma}~\thlab{lem:chopping}
Fix integers $n$, $d$, $k$, $m$. Let $T$ be a tree in $\mathcal{T}_{n,d}$ rooted at $x_0$, and let $T'$ be the truncation of $T$. Let $s=(d+d^2)k$. Suppose we are given a graph $G$, a vertex partition $(V_1, V_2, \dots, V_m)$ of $G$, and an edge-colouring $\psi:E(G)\rightarrow \{\red,\blue\}$ such that, for all $i\in[m]$, all the edges of $G[V_i]$ are present and are blue, and $|V_i|\geq s$. If there exists a blue copy of $T'$ in the $(G,\psi,s)$-colouring of $K_m$, then there exists a blue copy of $T\boxtimes K_k$ in $G$.
\end{lemma}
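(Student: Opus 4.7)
The plan is to build the blue copy of $T\boxtimes K_k$ by assigning to each $z\in V(T)$ a $k$-subset $A_z$ inside one of the parts $V_i$, with $A_z$ playing the role of the clique at vertex~$z$. The natural host assignment is $j(z)=\phi(z)$ if $z\in V(T')$, and $j(z)=\phi(p_T(z))$ otherwise (this is well defined because then $p_T(z)\in V(T')$). A level-by-level case analysis shows that every $T$-edge $\{z,z'\}$ either has $j(z)=j(z')$ or corresponds to a $T'$-edge: the cross-part $T$-edges are precisely $(x_0,x)$ for $x\in L_1$ and $(p_T(x),x)$ for $x\in V(T')$ at $T$-distance at least three from the root, each paired with the $T'$-edge $(p_{T'}(x),x)$. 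Since $G[V_{j(z)}]$ is a blue clique, $A_z$ is automatically a blue $K_k$ and each within-part $T$-edge contributes a blue $K_{k,k}$; only the cross-part $T$-edges need extra work, and each will be handled using a blue $K_{s,s}$ supplied by the corresponding $T'$-edge.

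I will pick the $A_z$'s by a BFS on~$T$ starting at $x_0$. For each $T'$-edge $e=(y,x)$ fix a blue $K_{s,s}$ with sides $X_e\subseteq V_{\phi(y)}$ and $Y_e\subseteq V_{\phi(x)}$, each of size~$s$; any $k$-subset of $X_e$ is blue-connected to any $k$-subset of $Y_e$. ``Target'' cliques $A_v$ for $v\in V(T')\setminus\{x_0\}$ will be placed as $k$-subsets of $Y_{(p_{T'}(v),v)}$, while ``source'' cliques $A_u$ for $u\in V(T)\setminus V(T')$ (and $A_{x_0}$) will be placed inside the source sides $X_{(v,x)}$ for every $T'$-edge $(v,x)$ where $x$ is a future cross-part $T$-child of $u$. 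The inductive invariant to maintain is that every placed source clique $A_u$ has at least~$s$ common blue neighbours in $V_{\phi(x)}$ for each future cross-part $T$-child~$x$ of $u$, which guarantees that the associated target clique $A_x$ can later be selected as a $k$-subset of $Y_{(p_T(u),x)}$.

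The main obstacle is maintaining this invariant. A source clique $A_u$ with $u\in V(T)\setminus V(T')$ has up to $d-1$ future $T$-children, and the clean sufficient condition is $A_u\subseteq\bigcap_{x\in C_T(u)}X_{(p_T(u),x)}$, an intersection of up to $d-1$ source-side $s$-subsets of $V_{\phi(p_T(u))}$. For $d=2$ only one set is involved and there is nothing to check; for $d\geq 3$ the intersection of several $s$-subsets inside a part possibly much larger than~$s$ need not be large on its own. The plan to overcome this is to exploit the freedom in choosing each $K_{s,s}$: for each $v\in V(T')$ I will coordinate the choices of blue $K_{s,s}$'s on all $T'$-edges incident to~$v$, aligning their source sides in $V_{\phi(v)}$ so that, for each $T$-child $u_j$ of $v$, $\bigcap_{x\in C_T(u_j)}X_{(v,x)}$ contains a $k$-subset disjoint from the $k$-subsets reserved in $V_{\phi(v)}$ for $A_v$ and the other source cliques $A_{u_{j'}}$. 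The bound $s=(d+d^2)k$ is calibrated precisely for this: one must fit at most $d$ disjoint $k$-subsets into $V_{\phi(v)}$ (explaining the $dk$ term), and absorb at most $d^2$ source-side constraints coming from the~$\leq d^2$ $T'$-edges incident to~$v$ (the $d^2k$ term). With this coordinated choice in hand, every step of the BFS reduces to picking a $k$-subset of a set of size at least~$k$; within-part $T$-edges are blue automatically and cross-part $T$-edges are blue by the $K_{s,s}$ construction, yielding the required blue copy of $T\boxtimes K_k$ in~$G$.
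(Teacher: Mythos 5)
There is a genuine gap, and it sits exactly where you flag the ``main obstacle.'' With your host assignment ($j(z)=\phi(z)$ for $z\in V(T')$, $j(z)=\phi(p_T(z))$ otherwise), an even-level vertex $u$ (or the root) whose clique $A_u$ lives in $V_{\phi(p_T(u))}$ can have several odd-level children $z_1,\dots,z_\ell$ whose cliques live in $\ell$ \emph{distinct} parts $V_{\phi(z_1)},\dots,V_{\phi(z_\ell)}$ (distinct because $\phi$ is injective on $V(T')$). So $A_u$ must lie in the $V_{\phi(p_T(u))}$-side of $\ell$ \emph{different} blue $K_{s,s}$'s simultaneously. The hypothesis of the lemma only provides, for each blue auxiliary edge, the existence of \emph{some} blue $K_{s,s}$ between the two parts; nothing allows you to ``coordinate'' or ``align'' their source sides, and in general this is impossible. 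Concretely, take $k=1$, $d=3$, $s=12$: let $|V_1|=24$ with $V_1=P\cupdot Q$, $|P|=|Q|=12$, $|V_2|=|V_3|=12$, all edges inside each $V_i$ blue, the bipartite graph between $V_1$ and $V_2$ blue exactly on $P\times V_2$, and between $V_1$ and $V_3$ blue exactly on $Q\times V_3$. Both auxiliary edges $12$ and $13$ are blue, yet no vertex of $V_1$ has a blue neighbour in both $V_2$ and $V_3$. Hence a vertex $u$ with $T$-children mapped to $V_2$ and $V_3$ and parent mapped to $V_1$ cannot be embedded at all under your assignment, no matter how the $K_{s,s}$'s are chosen. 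The coordination step is therefore not merely unproven; it is false, and the proof cannot be completed along these lines. (Your reading of $s=(d+d^2)k$ as ``$dk$ plus $d^2k$ absorbed constraints'' likewise does not correspond to an argument that can be run.)

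The fix is to flip the assignment, which is what the paper's proof does. Place each $x\in V(T')\setminus\{x_0\}$ not in its own part but in the part of its $T'$-parent $y=p_T^2(x)$, and place $C_T(x)$ in $V_{\phi(x)}$; the root and its children all go into $V_{\phi(x_0)}$. Then the $T$-edges from an even-level vertex down to its odd-level children become \emph{internal} to a part (hence automatically blue), and the only cross-part $T$-edges are those between $\{x\}$ and $C_T(x)$ for $x\in V(T')\setminus\{x_0\}$. For fixed $x$, all of these are handled by the \emph{single} blue $K_{s,s}$ witnessing the $T'$-edge $yx$, with $\{x\}\boxtimes K_k$ chosen inside its $V_{\phi(y)}$-side and $C_T(x)\boxtimes K_k$ inside its $V_{\phi(x)}$-side; no intersection of source sides is ever needed. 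The constant $s=(d+d^2)k$ then plays a different role from the one you assign it: each part $V_{\phi(x)}$ receives at most $|C_T^2(x)|\cdot k\le(d+d^2)k=s$ embedded vertices over the whole procedure, so at every step fresh vertices can be found inside the relevant $K_{s,s}$. Your bookkeeping via the bags $B_x=\{x\}\cup C_T(x)$ and the BFS order is otherwise sound and matches the paper's; only the orientation of the assignment needs to change.
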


\begin{figure}[h] \label{fig:truncation}
    \centering
    \includegraphics{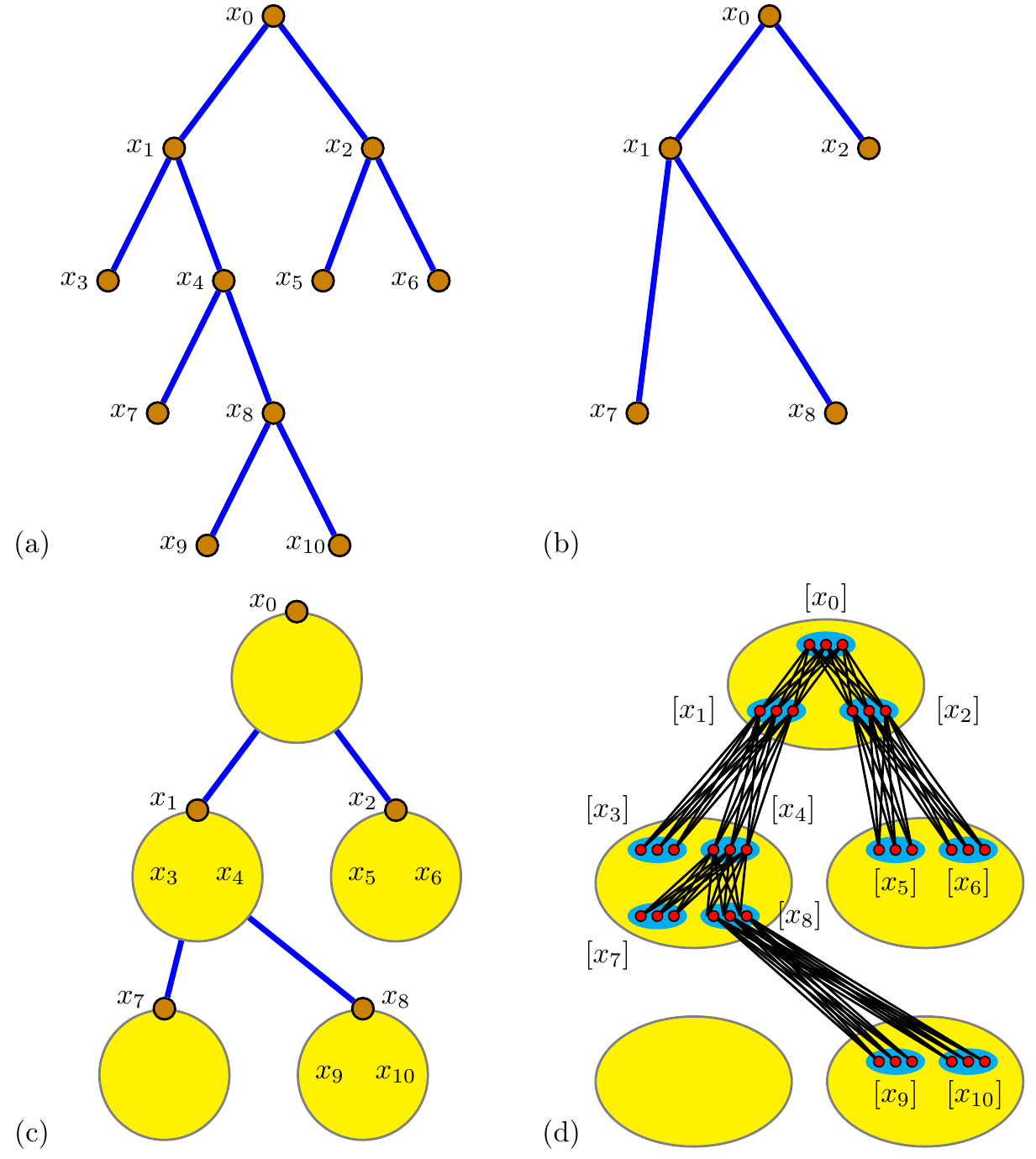}
    \caption{(a) tree $T$, (b) truncation $T'$, (c) the corresponding bags, (d) embedding of $T\boxtimes K_k$ where $[x_i]$ means $\{x_i\}\times K_k$}
    \label{Truncation}
\end{figure}

\begin{proof}
Let $\varphi$ be the $(G,\psi,s)$-colouring of $K_m$ and suppose  $g: V(T')\rightarrow [m]$ is an embedding of $T'$ in the blue subgraph of $K_m$. 
Let $x_0,x_1,x_2,\dots ,x_{m'}$ be the vertices of $V(T')$ ordered by their distance from the root $x_0$ in $T'$. We will find a blue copy of $T \boxtimes K_k$ whose vertices are in $V_{g(x_i)}$ for $i= 0, \dots, m'$. We warn the reader that in this proof we often use notation $f(S\boxtimes K_k)$ to denote the image of $S\boxtimes K_k$,  for some subset $S\se V(T)$,   under some  embedding $f$ into $G$, without precisely defining how $f$ acts on each vertex of $S\boxtimes K_k$, but rather claiming that such an embedding exists. This is done for brevity, and to keep the proof intuitive.

We define a collection $\{B_x : x\in V(T')\}$ of subsets of $V(T)$ as follows.
Let $B_{x_0}=\{x_0\}$ and for each $x\in V(T')\setminus\{x_0\}$, let $B_{x}=\{x\}\cup C_T(x)$. We call $B_x$  the \emph{bag} of the vertex $x$. Observe that the bags are pairwise disjoint, and they partition the entire vertex set $V(T)$. They will help us keep track of the  embedding of $T\boxtimes K_k$ in $G$.

We will proceed iteratively, starting from the root $x_0$ and following the order of the vertices $x_i$ we fixed earlier.  At each step $i$, we will have a partial embedding $f_i$ of $T_i\boxtimes K_k$ in $G$, where $T_i$ is the subtree $T[\cup_{j\leq i}{B_{x_j}}]$. Our final embedding will be $f=f_{m'}$.  At step $0$ we will embed $B_{x_0}\boxtimes K_k$ in some way; this will define $f_0$.  At step $i\geq 1$, $f_i$ will be defined as an extension of $f_{i-1}$,  and the extension will be defined only on $B_{x_i}\boxtimes K_k$ so that the image of the latter  `links' back appropriately to the embedding of $T_{i-1} \boxtimes K_k$. Before specifying our embedding, we list the properties that $f$ will satisfy:

\begin{itemize}
\item[(P1)] $f(T\boxtimes K_k)\subseteq \bigcup_{x\in V(T')}{V_{g(x)}}$,
\item [(P2)] $f((\{x_0\}\cup C_T(x_0))\boxtimes K_k) \se V_{g(x_0)}$, 
\item [(P3)] for every $x\in V(T')\setminus\{x_0\}$, $f(C_T^2(x) \boxtimes K_k) \se V_{g(x)}$, 
\item[(P4)] every edge of $f(T\boxtimes K_k)$ will be coloured blue.
\end{itemize}
 
 Note that (P2) implies that at most $(d+1)k$ vertices are embedded in $V_{g(x_0)}$, and every other $V_{g(x)}$ (with $x\neq x_0$) will contain at most $(d+d^2)k$ embedded vertices by (P3).  
 Moreover, (P4) will be  satisfied for edges of   $f(T\boxtimes K_k)$ embedded inside one partition class $V_j$. To guarantee  that those edges of $f(T\boxtimes K_k)$ that go between distinct partition classes $V_j$ and $V_k$ are blue, we will make use of the properties of the auxiliary colouring $\varphi$.  Finally, we define our iterative embedding  scheme from which properties (P1)--(P4) can be easily read out, thus completing the proof.

\textbf{Step 0:}  Let $T_0=\{x_0\}$ and embed $T_0\boxtimes K_k$ into $V_{g(x_0)}$, by picking any $k$ vertices in $V_{g(x_0)}$; this determines $f_0$. Recall that all edges inside $V_{g(x_0)}$ are blue, hence indeed this is a valid embedding of  $T_0\boxtimes K_k$.

\textbf{Step $\boldsymbol{ i\geq 1}$:} Having defined $f_{i-1}$, we now show how to extend it to $f_i$. Recall that $B_{x_i}=\{x_i\}\cup C_T(x_i)$. Let $y$ be the grandparent of $x_i$.  Since there is an edge $x_iy$ in $T'$ and since $g$ is a blue embedding of $T'$ in $K_m$, there is a blue $K_{s,s}$ between $V_{g(x_i)}$ and $V_{g(y)}$. Let $L$ be any such copy of $K_{s,s}$.  Define $f_i$ on $\{x_i\}\boxtimes K_k$ to be a set of any $k$ vertices in $V_{g(y)}\cap V(L)$  disjoint from the image of $f_{i-1}$. Define $f_i$ on $C_T(x_i)\boxtimes K_k$ to be any set of $k |C_T(x_i)|$ vertices in  $V_{g(x_i)}\cap V(L)$  disjoint from the image of $f_{i-1}$. This is possible since $|V_{g(x_i)} \cap V(L)| \geq s= (d^2+d)k$, and the total number of vertices embedded into $V_{g(x_i)}$ during the procedure is at most $(d^2+d)k$.
\end{proof}

The next lemma is a standard application of the Lov\'asz Local Lemma. Given a graph $F$ let $F(t)$ denote the \emph{blowup} of $F$ where each vertex $v$ is replaced by an independent set $I(v)$ of size $t$, and each edge $uv$ is replaced by a complete bipartite graph between $I(u)$ and $I(v)$.

\begin{lemma}~\thlab{lem:lifting}
Fix $t\geq 1$. Let $F$ be a graph with maximum degree $\Delta$. Let $F'$ be a spanning subgraph of $F(t)$  such that for every edge $vw\in E(F)$ there are at least $(1-\frac{1}{8\Delta})t^2$ edges in $F'$ between $I(v)$ and $I(w)$. Then $F\subseteq F'$.
\end{lemma}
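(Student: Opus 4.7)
The plan is a direct application of the Lovász Local Lemma (\thref{LLL}). For each vertex $v\in V(F)$ independently, pick $\sigma(v)\in I(v)$ uniformly at random. The goal is to show that with positive probability, the map $\sigma:V(F)\to V(F')$ is an embedding of $F$ into $F'$, i.e.~for every edge $uv\in E(F)$ the pair $\sigma(u)\sigma(v)$ is an edge of $F'$.

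For each edge $e=uv\in E(F)$ let $A_e$ be the bad event that $\sigma(u)\sigma(v)\notin E(F')$. Since there are at most $\frac{t^2}{8\Delta}$ non-edges of $F'$ inside the bipartite graph between $I(u)$ and $I(v)$, and since $(\sigma(u),\sigma(v))$ is uniformly distributed over $I(u)\times I(v)$, we have
\[
\Pr[A_e]\leq \frac{1}{8\Delta}.
\]
The event $A_e$ depends only on the random choices $\sigma(u)$ and $\sigma(v)$, so it is mutually independent of the family $\{A_{e'}:e'\in E(F),\ e'\cap e=\emptyset\}$. The number of edges of $F$ sharing an endpoint with $e$ (other than $e$ itself) is at most $2(\Delta-1)<2\Delta$.

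Applying \thref{LLL} with $p=\frac{1}{8\Delta}$ and $d=2\Delta$ yields $4pd=1$, so (using the strict inequality $d<2\Delta$) the hypothesis $4pd\leq 1$ holds and with positive probability none of the events $A_e$ occurs. Any outcome in this positive-probability event gives an embedding $\sigma$ of $F$ into $F'$, hence $F\subseteq F'$. The only thing to watch is that the count $d$ of dependent events is in terms of edges of $F$, not vertices; since each edge shares a vertex with strictly fewer than $2\Delta$ others, the LLL condition is satisfied exactly at the boundary, and no delicate estimate is required.
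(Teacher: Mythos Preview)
Your proof is correct and is essentially identical to the paper's own argument: the same random choice of one vertex per blob, the same bad events, the same probability bound $1/(8\Delta)$, and the same dependency count of at most $2\Delta$ edges sharing an endpoint, followed by the Lov\'asz Local Lemma. The only cosmetic difference is that the paper simply uses $d=2\Delta$ and checks $4pd\le 1$, whereas you additionally note the strict bound $2(\Delta-1)$; both work.
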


\begin{proof}
For each vertex $v$ of $F$, independently choose a random vertex $v'$ in $I(v)$. For each edge $vw$ of $F$, let $E_{vw}$ be the event that $v'w'$ is not an edge of $F'$. Since there are at least $(1-\frac{1}{8\Delta})t^2$ edges between $I(v)$ and $I(w)$, the probability of $E_{vw}$ is at most $\frac{1}{8\Delta}$. Each event $E_{vw}$ is mutually independent of all other events, except for the at most $2\Delta$ events corresponding to edges incident to $v$ or $w$.  Since $4(\frac{1}{8\Delta})(2\Delta) \leq 1$, by Lemma~\ref{LLL}, 
the probability that some event $E_{v,w}$ occurs is strictly less than 1. 
Thus, there exist choices for $v'$ for all $v\in V(F)$, such that $v'w'$ is an edge of $F'$ for every edge $vw$ of $F$. This yields a subgraph of $F'$ isomorphic to $F$. 
\end{proof}

Both \thref{thm:tw} and \thref{thm:offdiagonal} are implied by Lemma~\ref{StrongProduct} and the following result.

\begin{theorem} \thlab{thm:mainsizezramsey}
For all integers $k, d$ there exists $c=c(k,d)$ such that for all $n$ there is a graph $G$ with $cn$ vertices and maximum degree $c$, such that for all trees $T_1$ and $T_2$ with $n$ vertices and maximum degree $d$, every red/blue-colouring of $E(G)$ contains a red copy of $\sprod{T_1}{K_k}$ or a blue copy of $\sprod{T_2}{K_k}$.
\end{theorem}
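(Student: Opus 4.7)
The plan is to take $G = H^3 \boxtimes K_t$, where $H$ is the $(N,D,2\sqrt{D})$-graph produced by Lemma~\ref{lem:originalgraph} on $N = \Theta(n)$ vertices, and $t,D$ are constants depending only on $d,k$. Set $s' = (d+d^2)k$ to match the bag size required by Lemma~\ref{lem:chopping} when applied to truncations of trees in $\mathcal{T}_{n,d}$ (such truncations lie in $\mathcal{T}_{n,d^2}$); pick $s \ge s'$ large enough that every $2$-colouring of $K_{s,s}$ contains a monochromatic $K_{s',s'}$ (bipartite Ramsey); take $t \ge R(s,s)$ so that every $2$-colouring of each bag $V_i := \{v_i\} \times [t] \cong K_t$ contains a monochromatic $K_s$; and choose $D$ large enough that Lemma~\ref{lem:originalgraph}(2) applies to $\mathcal{T}_{n,d^2}$ with some fixed $\varepsilon$, and so that $D^3 \gg d^2$. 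Then $G$ has $O(n)$ vertices and maximum degree bounded by a constant depending only on $d,k$.

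Given a $2$-colouring $\psi$ of $E(G)$, inside each bag $V_i$ fix a monochromatic $K_s$ in $\psi$, call it $W_i$, and let $\sigma(v_i) \in \{\red,\blue\}$ be its colour. By pigeonhole $|U| \ge N/2$ for $U := \sigma^{-1}(c)$ with some $c$; by symmetry assume $c = \blue$. For each $v_iv_j \in E(H^3[U])$, the $K_{s,s}$ between $W_i$ and $W_j$ in $G$ contains a monochromatic $K_{s',s'}$ by bipartite Ramsey; let $\chi(v_iv_j)$ be its colour. I would then follow the dichotomy underlying Lemma~\ref{aux:treeorpartite}, applied to the auxiliary colouring $\chi$ on $H^3[U]$.

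In the blue case, if the blue-$\chi$ subgraph of $H^3[U]$ satisfies the Friedman--Pippenger expansion hypothesis of Lemma~\ref{lem:friedman} for $\mathcal{T}_{n,d^2}$, then it contains a copy of the truncation $T_2'$. Applying Lemma~\ref{lem:chopping} with partition $(W_i)_{v_i \in U}$ inside the blue-$\psi$ subgraph of $G$ then returns the desired blue copy of $T_2 \boxtimes K_k$.

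In the complementary red case, iterating the expansion-failure argument as in the proof of Lemma~\ref{aux:treeorpartite} produces a red-$\chi$ complete multipartite skeleton on $U$ with parts $Y_1, \ldots, Y_q$ each of size $\Omega(N/q)$. Turning this skeleton into a red copy of $T_1 \boxtimes K_k$ in $G$ is the main obstacle, since each $W_i$ is internally all-blue in $\psi$ and so the $k$ copies of a single $T_1$-vertex cannot live in one bag. I would $2$-colour $T_1$ (possible since $T_1$ is bipartite) and assign its two colour classes to $Y_1$ and $Y_2$; for each super-vertex of $T_1$ pick $k$ bags in the appropriate $Y_a$ forming a $K_k$ in $H^3$ (abundant because $D$ is a large constant and $|Y_a| = \Omega(N)$); and inside each such $k$-tuple of bags select one vertex per bag so that all $\binom{k}{2}$ within-super-vertex pairs, as well as all $K_{k,k}$ edges to $T_1$-adjacent super-vertices across parts, are red in $\psi$. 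Each such selection must lie in the $s'$-sized admissible subset of its bag guaranteed by the red-$\chi$ label, and the intersection of $O(dk)$ such subsets inside an $s$-sized bag is nonempty provided $s$ is chosen sufficiently large relative to $s'$. The symmetric case $|\sigma^{-1}(\red)| \ge N/2$ is handled by exchanging the roles of the colours throughout.
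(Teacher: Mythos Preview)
Your host graph and the overall dichotomy are the same as the paper's, and your blue case (truncation $+$ Lemma~\ref{lem:chopping}) is essentially correct. The red case, however, has two genuine gaps.

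\textbf{Embedding the skeleton in $H^3$.} You say ``for each super-vertex of $T_1$ pick $k$ bags in the appropriate $Y_a$ forming a $K_k$ in $H^3$'', but you never ensure that the $k$-tuple chosen for $v$ is $H^3$-complete to the $k$-tuple chosen for each $T_1$-neighbour of $v$. Since $H^3$ has bounded degree $D^3$, this coordination is the heart of the problem, and a two-part bipartition of $T_1$ does not help. The paper's device is to take $2k+1$ parts $V_0,\dots,V_{2k}$ (not two), find in $H$ itself large matchings $M_i$ between $V_0$ and each $V_i$ covering a common set $S\subseteq V_0$ of size $\ge\varepsilon N$, and then embed $T_1$ in $H[S]$ via Lemma~\ref{lem:originalgraph}(2). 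Each $v\in V(T_1)$ is then replaced by its matched partners $\{v_1,\dots,v_k\}$ or $\{v_{k+1},\dots,v_{2k}\}$ according to the parity of its depth; any two such partners are at $H$-distance $\le 2$, and partners of $T_1$-adjacent vertices are at $H$-distance $\le 3$, so the whole copy of $T_1\boxtimes K_k$ lands in $H^3$ automatically. This matching step is the missing idea in your proposal.

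\textbf{Lifting to $G$.} Your final selection argument is false as stated: $O(dk)$ subsets of size $s'$ inside a set of size $s$ need not have a common point, regardless of how large $s$ is relative to $s'$ (just take them disjoint). More fundamentally, by defining $\chi$ via bipartite Ramsey you only learn that a red-$\chi$ edge carries \emph{some} red $K_{s',s'}$, and these $s'$-sets vary from edge to edge with no reason to intersect. The paper instead keeps the asymmetric definition (blue in $\varphi$ iff a blue $K_{s,s}$ exists), so that a red $\varphi$-edge means \emph{no} blue $K_{s,s}$ between the full bags $B(u),B(v)$ of size $t=(64kd)^s$; by K\H{o}v\'ari--S\'os--Tur\'an at most $t^2/(16dk)$ of the $t^2$ edges are blue, and a single Local-Lemma step (Lemma~\ref{lem:lifting}) then lifts the red skeleton to a red copy of $T_1\boxtimes K_k$ in $G$.
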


\begin{proof}
Let $\eps= (d^{2}(2k+1)2^{2k+4})^{-1}$. Let $D$ be  the smallest even number larger than $100d^2/\eps^4$. Let $c$ be derived from Lemma~\ref{lem:originalgraph}  applied with this choice of $\eps$, $d$ and $D$. Let $N=\max\{cn, 40nd^2(2k+1)\}$ and let  $H$ be any $N$-vertex $D$-regular graph derived from Lemma~\ref{lem:originalgraph}. Set $s = (d^2+d)k$ and $t=(64kd)^s$. Denote the Ramsey number of $t$ by $r(t)$. Recall that $H^3$ is a graph on the same vertex set as $H$ where $uv$ is an edge in $H^3$ whenever $u$ and $v$ are at distance at most three in $H$.  Let $G = \sprod{H^3}{K_{r(t)}}$.
 
Since $H$ is $D$-regular, $H^3$ has maximum degree at most $D^3$, and $G$ has maximum degree at most $D^3 r(t)+ r(t)-1$. Let $A(v)$ denote the copy of $K_{r(t)}$ corresponding to $v\in V(H)$. Let $\psi:E(G)\rightarrow \{\red, \blue\}$ be any edge-colouring of $G$. We will show that it must contain either a red copy of $T_1 \boxtimes K_k$ or a blue copy of $T_2\boxtimes K_k$. 

By definition of $r(t)$, for each vertex $v\in V(H)$, $A(v)$ contains a monochromatic copy of $K_t$, say on vertex set $B(v)$. Let $W$ be the set of vertices $v\in V(H)$ for which $B(v)$ induces a blue $K_t$. By symmetry between $T_1$ and $T_2$, we may assume that $|W| \geq \frac {1}{2} |V(H)|$. Let  $N'=|W| \geq \frac {N}{2}$.
    
Let $B(W) = \bigcup_{v \in W} B(v)$ and let $\varphi$ be  the $(G[B(W)], \psi, s)$-colouring of $K_{N'}$.  Root $T_2$ at an arbitrary vertex. Let $T_2'$ be the truncation of $T_2$. If there is a blue copy of $T_2'$ in $K_{N'}$, then \thref{lem:chopping} implies that $G[B(W)]$ contains a blue copy of $T_2 \boxtimes K_k$, so we are done.
    	
We henceforth assume that there is no blue copy of $T_2'$ in $K_{N'}$. Since $T_2'$ has maximum degree at most $d^2$ and $N' \geq 20nd^2(2k+1)$ there are sets $V_0,V_1,\dots, V_{2k}\subseteq V(K_{N'})$ of size at least $\frac{N'}{5d^2(2k+1)}$ such that all the edges in $K_{N'}$  between two distinct parts $V_i$ and $V_j$ are red, by Lemma~\ref{aux:treeorpartite}.
   
For $i\in[2k]$, define an \emph{$i$-matching} to be a matching of edges in $H$ with one endpoint in $V_0$ and the other in $V_i$. (Note that we are now considering the original graph $H$ not $K_{N'}$.)\ We will find a set $S \subseteq V_0$ satisfying $|S|>2^{-{2k}}|V_0|$, and a collection of $i$-matchings  $\{M_i\}_{i=1}^{2k}$  such that each $M_i$ covers $S$. We proceed by induction on $i$.  Assume  at the end of step $j\leq 2k-1$  we have  found a set $S_j \subseteq V_0$ with $|S_j|>2^{-{j}}|V_0|$ and a collection of $i$-matchings $\{M_i\}_{i=1}^{j}$, where each $M_i$ covers $S_j$. At step $j+1$, let $M_{j+1}$ be a maximum matching   between $S_j$ and $V_{j+1}$. If $M_{j+1}$ consists of fewer than $|S_j|/2$ edges, then, by K\H{o}nig's theorem, the bipartite graph between $S_j$ and $V_{j+1}$ has a vertex cover of order at most $|S_j|/2 $. But then we can find sets $X \subset S_j$ and $Y \subset V_{j+1}$ with $e_{H}(X, Y)=0$ and  $|X|, |Y| \geq |S_j|/2 \geq 2^{-2k-2}|V_0| > \eps N$. This contradicts property (1) from Lemma~\ref{lem:originalgraph}. Hence $M_{j+1}$ covers at least $|S_j|/2 \geq |V_0|\cdot 2^{-(j+1)}$ vertices of $S_j$. We set $S_{j+1}=V(M_{j+1})\cap S_j$ and proceed. After $2k$ steps, we reach the desired set $S_{2k}$, which we call $S$.
  
For each vertex $v  \in S$, for $i\in[2k]$, let $v_i \in V_i$ be the unique neighbour of $v$ in $M_i$. Since $|S|>2^{-2k}|V_0|> \eps N$, $H[S]$ contains a copy $\tilde{T}_1$ of $T_1$ on some vertex set $U$ by property (2) from Lemma~\ref{lem:originalgraph}.   Next we show that there is a red copy of $T_1 \boxtimes K_{k}$ in $K_{N'}$ contained in the vertex set  of $\tilde{T}_1 \cup \{M_i\}_{i=1}^{2k}$ and use this copy to find a red copy  of  $T_1 \boxtimes K_{k}$ in $G[B(W)]$ via~\thref{lem:lifting}.
        
Root  $\tilde{T}_1$  at any vertex $\tilde{r}$.  For  every vertex $v\in V(\tilde{T}_1)$ let $S(v) = \{v_1,v_2,\dots,v_k \}$ if $v$ is at even distance from $\tilde{r}$ and $S(v) = \{v_{k+1},v_{k+2},\dots,v_{2k} \} $, otherwise. Note that for any $u,v\in V(\tilde{T}_1)$, the sets $S(u)$ and $S(v)$ are disjoint. Moreover, for every $v\in V(\tilde{T}_1)$, $S(v)$ induces a red clique in $K_{N'}$ because the vertices of $S(v)$  are elements of distinct partition classes~$V_i$. If $u$ and $v$ are adjacent in $\tilde{T}_1$, then also edges between $S(u)$ and $S(v)$  are red in $K_{N'}$ since  all the vertices of $S(u) \cup S(v)$ lie in distinct partition classes $V_i$.  So this shows that the vertex set $\bigcup_{v \in U}{S(v)}$ induces a red copy  $\tilde{T_1 \boxtimes K_{k}}$ of   $T_1 \boxtimes K_{k}$ in $K_{N'}$. It remains to `lift' this copy to the graph $G[B(W)]$ with the colouring $\psi$.  First we observe that every edge in  $\tilde{T_1 \boxtimes K_{k}}$  is in fact an edge of $H^3$.  Indeed, for any $u\in V(\tilde{T}_1)$, and any $i\neq j$,  $u_i,u_j\in S(u)$  are at distance at most two in $H$, hence $u_iu_j$ is an edge in $H^3$. Now if $u$ and $v$ are adjacent in $\tilde{T}_1$, then for any $u_i\in S(u)$ and $v_j\in S(v)$, the distance between $u_i$ and $v_j$ in $H$ is at most $3$, so $u_i$ and $v_j$ are also adjacent in $H^{3}$.

Recall that if $uv$ is an edge of $H^{3}$ and $\varphi(uv)$ is red in $K_{N'}$, then the complete bipartite graph $G_{uv}$ between $B(u)$ and $B(v)$ in $G$ contains no blue copy of $K_{s, s}$. Lemma~\ref{KST} implies that $G_{uv}$ has at most $ (s-1)^{1/s} t^{2-1/s} + (s-1) \leq 4t^{2-1/s}$ blue edges. Note that $4t^{2-1/s} \leq \frac{t^2}{16dk}$. Let $F= \tilde{T_1 \boxtimes K_{k}}$ and let $F'$ be the subgraph of $G$ consisting of all the red edges of $G_{uv}$ over all $uv\in E(F)$. It is now easy to see that $F$ and $F'$ satisfy the assumptions of~\thref{lem:lifting}. Therefore $G$ contains a red copy of $T_1 \boxtimes K_k$ which finishes the proof.
\end{proof}

\section{Proof of \thref{thm:d-degenerate}}
\label{sec:SecondTheorem}

Let $T_{d, h}$ be the complete $d$-ary tree of height $h$ with a root vertex $r$; that is, every non-leaf vertex has exactly $d$ children and every leaf is at distance $h$ from $r$. \thref{thm:d-degenerate} is implied by the following.  Recall that, for a rooted tree $T$, $d_T (v)$ denotes the number of edges of the path from the root to $v$  in $T$.

\begin{theorem} \thlab{thm:main}
For every integer $i\geq 1$, every $(2^i-1)$-degenerate graph $G$ is not Ramsey for the tree $T_{2^{i+1},2^{i}}$. 
\end{theorem}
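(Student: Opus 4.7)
Let $d=2^i-1$, so the forbidden tree is $T:=T_{2d+2,d+1}$.  The plan is to produce an explicit $2$-colouring of $E(G)$ that contains no monochromatic copy of $T$ in either colour; since the argument only uses the degeneracy hypothesis, I would in fact prove the equally easy stronger statement that every $d$-degenerate graph fails to be Ramsey for $T_{2d+2,d+1}$, for every integer $d\ge 1$.

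To set up, fix an ordering $v_1,\dots,v_n$ of $V(G)$ in which every $v_j$ has at most $d$ back-neighbours $B(v_j):=\{v_i:i<j,\ v_iv_j\in E(G)\}$; such an ordering exists because $G$ is $d$-degenerate.  Using this ordering I would greedily build a proper $(d+1)$-colouring $\ell\colon V(G)\to\{0,1,\dots,d\}$ by taking $\ell(v_j)$ to be the smallest element of $\{0,1,\dots,d\}$ not already appearing in $\ell(B(v_j))$.  This is well defined because $|B(v_j)|\le d<d+1$, and it satisfies $\ell(u)\ne\ell(v)$ for every edge $uv$.

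Using $\ell$ I would then $2$-colour each edge $uv$ of $G$ (say with $u\in B(v)$, so $u$ is the earlier endpoint) \emph{red} if $\ell(u)<\ell(v)$ and \emph{blue} if $\ell(u)>\ell(v)$; because $\ell$ is proper, this is well defined.  To show the colouring works, suppose for contradiction that the red subgraph contains a copy of $T$ and root it arbitrarily at some vertex $r$.  For any non-leaf vertex $v$ of $T$, the $2d+2$ children of $v$ in $T$ correspond to $2d+2$ red edges incident to $v$ in $G$; at most $d$ of these children can lie in $B(v)$ (since $|B(v)|\le d$), so at least $d+2$ children $c$ of $v$ in $T$ satisfy $v\in B(c)$, and then the red edge $vc$ forces $\ell(v)<\ell(c)$.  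In particular $v$ always has a child in $T$ with strictly larger $\ell$-value.

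Starting at $r=v_0$ and greedily selecting at each step a child with strictly larger $\ell$-value, I obtain a root-to-leaf path $v_0,v_1,\dots,v_{d+1}$ in $T$ along which $\ell(v_0)<\ell(v_1)<\cdots<\ell(v_{d+1})$.  This is a strictly increasing sequence of $d+2$ values inside $\{0,1,\dots,d\}$, a set of size only $d+1$, which is impossible.  The blue case is identical with all inequalities reversed, completing the proof.  The whole argument rests on one degree count (at most $d$ of the $2d+2$ children of any vertex can be back-neighbours) followed by pigeonhole, so I do not anticipate any technical obstacle.
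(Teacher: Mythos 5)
Your argument is correct and is essentially the Gei\ss er--Rollin--Stumpf argument that the paper includes as its ``Second proof'' of \thref{thm:d-degenerate}: a greedy proper $(d+1)$-colouring obtained from a degeneracy ordering, edges $2$-coloured according to whether the vertex colour increases or decreases from the earlier endpoint, and a forced strictly monotone sequence of $d+2$ colour values along a root-to-leaf path. (The paper's primary proof of \thref{thm:main} is genuinely different --- an induction on $i$ that partitions $G$ into two parts each of half the degeneracy and lets crossing edges inherit the colour of their source --- but your route is the one the paper itself presents as the short alternative; note that arity $d+1$ already suffices in place of your $2d+2$, since at most $d$ children can be back-neighbours and you only ever need one forward child per vertex.) One phrase to tighten: root the monochromatic copy at the image of the root of $T$ rather than ``arbitrarily'', since re-rooting would change the children counts ($2d+2$ per non-leaf vertex) and the height on which your pigeonhole step relies.
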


\begin{proof}
We proceed by induction on $i$. For $i = 1$, $G$ is a tree, so fix an arbitrary vertex to be the root of $G$ and colour the edges of $G$ by their distance to the root modulo 2 (where the distance of an edge $uv$ to the root $r$ is $\min\{d_G(u),d_G(v)\}$). There is no monochromatic path of length 3, and in particular no monochromatic copy of $T_{4,2}$.

Now let $i\ge 2$ and set $d= 2^i$ and $h=2^{i-1}$ for brevity. 
Let $G$ be a $(d-1)$-degenerate graph. It follows from the definition of degeneracy that $G$ has a vertex-ordering $v_1, v_2, \dots, v_n$, such that each vertex $v_j$ has at most $d-1$ neighbours $v_k$ with $k < j$. 
Form an oriented graph $\vec G$ by choosing the orientation $(v_j,v_k)$ for an edge $v_jv_k\in E(G)$ if $j<k$. 
Then each vertex has in-degree at most $d-1$. 

We now partition $V(G)$ into sets $V_r$ and $V_b$ such that both $G[V_r]$ and $G[V_b]$ are $(d/2-1)$-degenerate. 
Start by assigning $v_1$ to $V_r$. 
For $j=2,3,\dots,n$, assume that $v_1, v_2, \dots, v_{j-1}$ have been assigned to $V_r$ or $V_b$. 
Add $v_j$ to $V_r$ if $V_r$ contains at most $d/2-1$ of the in-neighbours of $v_j$. Otherwise add it to $V_b$. Note that in the latter case, $V_b$ contains at most $d/2-1$ of the in-neighbours of $v_j$, since $v_j$ has at most $d-1$ in-neighbours in $\vec G$. Clearly, this does not affect the in-degree of $v_1, v_2, \dots, v_{j-1}$ in $\vec G[V_r]$ and $\vec G[V_b]$. 
Thus, this process produces the desired sets $V_r$ and $V_b$.

 By induction, there is a red/blue-colouring $\psi'$ of the edges in $E_G(V_r) \cup E_G(V_b)$ not containing a monochromatic copy of $T_{d,h}$. 
We extend $\psi'$ to a red/blue-colouring  $\psi$ of $E(G)$ in the following way. 
For an edge $uv\in E_G(V_r,V_b)$ assume without loss of generality that it is directed from $u$ to $v$ in $\vec G$, that is $(u,v)\in \vec G$. Then colour $uv$ red if $u\in V_r$, and blue if $u \in V_b$. In other words, the edge $uv$ `inherits' the colour from its source vertex in $\vec G$. 

We claim that there is no monochromatic copy of $T_{2d, 2h}$ in this colouring of $E(G)$. Assume the opposite and let ${\tilde T_{2d, 2h}}$ be a monochromatic copy of $T_{2d, 2h}$ in G. For each vertex $v$ in $T_{2d, 2h}$, we denote its copy in ${\tilde T_{2d, 2h}}$ by $\tilde v$. Without loss of generality we may assume that ${\tilde T_{2d, 2h}}$ is red. 

\begin{claim} \label{claim:aux1}
If $\tilde v$ is a non-leaf vertex of $\tilde T_{2d, 2h}$ that lies in $V_b$, then there are at least $d$ children 
$\tilde u_1, \dots, \tilde u_{d}$ of $\tilde v$ in $\tilde T_{2d, 2h}$ such that $\tilde u_j\in V_b$ for all $j\in [d]$. 
\end{claim} 
\begin{proof} The number of children of the vertex $\tilde v$ in  $\tilde T_{2d, 2h}$ is $2d$.  Out of these, the number of children $w$ such that $(w,\tilde{v})\in \vec G$ is at most $d-1$. Furthermore, each edge $(\tilde v, w)\in \vec G$ with $ w \in V_r$ is coloured blue in $\psi$, by definition and since $\tilde v \in V_b$. That implies that none of these edges can be part of ${\tilde T_{2d, 2h}}$. It follows that at least $d+1$ neighbours of $\tilde v$ in $\tilde T_{2d, 2h}$ are elements of $V_b$. At most  one of these vertices is the parent of $\tilde v$, and the claim follows. 
\end{proof}
Recall that $\tilde r$ is the root of $\tilde T_{2d, 2h}$. 
\begin{claim}
For every vertex $\tilde v \in V(\tilde T_{2d, 2h})$ at distance at most $h$ from $\tilde r$ in $\tilde T_{2d, 2h}$ we have that $\tilde v\in V_r$.  
\end{claim}

\begin{proof} 
Assume that $\tilde v \in V_b$ and has distance at most $h$ in $\tilde T_{2d, 2h}$ from $\tilde r$. Apply Claim~\ref{claim:aux1} iteratively to $\tilde v$ and all of its descendants $\tilde u$ that lie in $V_b$. In $h$ iterations (before reaching the leaves of $\tilde T_{2d, 2h}$), we construct a red copy of $T_{d, h}$ whose vertices all lie in $V_b$; that is,  a red copy of $T_{d, h}$. This contradicts the property of $\psi'$.
\end{proof}

It follows that all vertices in $\tilde T_{2d, 2h}$ at distance at most $h$ from $\tilde r$ must lie in $V_r$, forming a red copy of $T_{d,h}$ in $G[V_r]$, which again contradicts the property of $\psi'$. \end{proof}

After the first preprint of this paper was finished we learned~\cite{rollinPersonal} that Maximilian Gei\ss er, Jonathan Rollin and Peter Stumpf independently obtained a proof of 
\thref{thm:d-degenerate}. This proof is unpublished, yet short and nice, so we include their argument here. 

\begin{proof}[Second proof of \thref{thm:d-degenerate}]
Let $G$ be a $d$-degenerate graph. We show that  $G$ is not Ramsey for $T_{d+1,d+1}$. 
Assume without loss of generality that the vertex set of $G$ is $[n]$, for some $n$, and that every $u\in V(G)$ has at most $d$ neighbours $v$ with $v<u$. 
Let $\phi:V(G)\to [d+1]$ denote a proper colouring of the vertices of $G$ using at most $d+1$ colours.  Define an edge colouring $\psi$ by colouring an edge $uv$ with $u < v$ red if $\phi(u) < \phi(v)$ and blue otherwise. A path $v_1 \ldots v_n$ in $G$ is called monotone if its vertices are ordered $v_1 < \ldots < v_n$. Each monochromatic monotone path in $\psi$ has at most $d$ vertices, since the colours of its vertices are either increasing or decreasing under $\phi$. On the other hand each copy of $T_{d+1,d+1}$ in $G$ contains a monotone path on $d$ vertices, since each inner vertex $u$ has a child $v$ with $u<v$. Hence there are no monochromatic copies of $T_{d+1,d+1}$ in $G$.
\end{proof}

\section{Concluding Remarks}

We have shown that for a graph $H$ of bounded maximum degree and treewidth,   there is a graph $G$ with $O(|V(H)|)$ edges that is Ramsey for $H$. It is now natural to ask whether the size Ramsey number of a planar graph $H$ of bounded degree is linear in $|V(H)|$. A first candidate to consider is the grid graph. 
Recently Clemens, Miralaei, Reding, Schacht and Taraz~\cite{grid} have shown that  the size Ramsey  number of the grid graph on $n\times n$ vertices is bounded from above by $n^{3+o(1)}$. There are no non-trivial lower bounds known. 

\begin{question}  Is the size Ramsey number of the grid graph on $n\times n$ vertices $O(n^2)$?
\end{question}

Furthermore, we propose a multicolour extension of our result.
\begin{question}
Given positive integers $w,d,n, s\geq 3$ and an $n$-vertex graph $H$ of maximum degree $d$ and treewidth $w$, does there exist $C=C(w,d,s)$ and  a graph $G$ with $Cn$ edges such that every $s$-colouring of the edges of $G$ contains a monochromatic copy of $H$? 
\end{question} When $H$ is a bounded-degree tree, a positive answer (and even a stronger density analogue result) follows from the work of Friedman and Pippinger~\cite{fp1987}. Han, Jenssen, Kohayakawa, Mota and Roberts~\cite{hjkmr2018} have recently shown that the above extension holds for graphs of bounded bandwidth (or, equivalently, for any fixed power of a path).

Our second result is that the edges of every $d$-degenerate graph can be 2-coloured without a monochromatic copy of a fixed tree $T=T(d)$. The maximum degree of $T$ in the proof of \thref{thm:main} is $2d+1$. It follows from \cite[Lemma 5]{mp2013} that $T$ cannot be replaced by a tree whose maximum degree is bounded by an absolute constant which is independent of $d$.

Ding, Oporowski, Sanders and Vertigan~\cite{ding1998} also showed that for every tree $T$, there is a graph $G$ of treewidth two such that every red/blue-colouring of the edges of $G$ contains a red copy of $T$ or a blue copy of a subdivision of $T$. We wonder whether the following generalisation is true. 
\begin{question}
	Is there a function $f(k)$ with the following property: for every graph $H$ of treewidth $k$, there is a graph $G$ of treewidth $f(k)$ such that every red/blue-colouring of the edges of $G$ contains a red copy of $H$ or a blue copy of a subdivision of $H$?
\end{question}

\bigskip
\noindent
{\bf Acknowledgement:} We would like to thank Jonathan Rollin for sending us the alternative proof of \thref{thm:d-degenerate} and for pointing us to \cite{mp2013}. Following the original release of our paper, the paper~\cite{Motaetal} was posted to the arXiv. It provides an affirmative answer to Question 5.2.

  \let\oldthebibliography=\thebibliography
  \let\endoldthebibliography=\endthebibliography
  \renewenvironment{thebibliography}[1]{%
    \begin{oldthebibliography}{#1}%
      \setlength{\parskip}{0.1ex}%
      \setlength{\itemsep}{0.1ex}%
  }{\end{oldthebibliography}}

\bibliographystyle{abbrv}
\bibliography{references}

\end{document}